\documentclass[reqno]{amsart}
\usepackage{amsmath,amsfonts,amssymb,xcolor,enumitem,upgreek,graphicx,comment,caption}

\usepackage[english]{babel}
\usepackage[colorlinks=true,linkcolor=red!60!black,citecolor=green!60!black]{hyperref} %para PDFLaTeX
\usepackage{algorithm}
\usepackage{algpseudocode}

\usepackage[normalem]{ulem}

\usepackage{comment}

\newtheorem{teor}{Theorem}[section]

\newtheorem{prop}[teor]{Proposition}
\newtheorem{coro}[teor]{Corollary}
\theoremstyle{definition}

\newtheorem{remark}[teor]{Remark}

\newcommand{\R}{\mathbb R}
\newcommand{\N}{\mathbb N}
\newcommand{\T}{\mathbb T}
\newcommand{\Z}{\mathbb Z}
\newcommand{\ep}{\varepsilon}
\newcommand{\lb}{\lambda}

\newcommand{\w}{\omega}

\newcommand{\mB}{\mathcal{B}}
\newcommand{\mC}{\mathcal{C}}
\newcommand{\mD}{\mathcal{D}}
\newcommand{\mF}{\mathcal{F}}
\newcommand{\mI}{\mathcal{I}}

\newcommand{\mK}{\mathcal{K}}

\newcommand{\mN}{\mathcal{N}}

\newcommand{\mT}{\mathcal{T}}
\newcommand{\mU}{\mathcal{U}}
\newcommand{\mV}{\mathcal{V}}
\newcommand{\mW}{\mathcal{W}}

\newcommand{\dist}{\mathrm{dist}}

\newcommand{\bx}{x}

\numberwithin{equation}{section}

\begin{document}
\title[Asymptotically Autonomous Maps]{Asymptotically Autonomous Maps: Heteroclinic connections, Splitting and Phase-induced tipping}
\author[J. Due\~{n}as]{Jes\'{u}s Due\~{n}as}
\address[J. Due\~{n}as]{Departamento de Matem\'{a}tica Aplicada, Universidad de Va\-lladolid. Instituto de Investigaci\'{o}n en Matem\'{a}ticas, Universidad de Valladolid.}
\email[J.~Due\~{n}as]{jesus.duenas@uva.es}
\author[A. Vieiro]{Arturo Vieiro}
\address[A. Vieiro]{Departament de Matemàtiques i Informàtica, Universitat de Barcelona.}
\email[A. Vieiro]{vieiro@maia.ub.es}
\thanks{J.~Due\~{n}as is supported by the Spanish Grant No. PID2024-156691NB-I00 funded by MICIU/AEI/10.13039/501100011033 and FEDER Funds, and by Department of Education of the Junta de Castilla y León and FEDER Funds under project CLU-2025-1-02-IMUVA.
A.~Vieiro is supported by the Spanish Grant No. PID2021-125535NB-I00 funded by MICIU/AEI/10.13039/501100011033 and by ERDF/EU}
\date{}
\begin{abstract}
We consider discrete-time dynamical systems generated by the iteration of asymptotically autonomous maps, whose asymptotic behavior may be either conservative or dissipative. These systems are of particular interest in the modeling of complex phenomena with time-dependent parameter variation.
With an appropriate time compactification, the past and future infinity become
normally hyperbolic invariant hyperplanes, and thus the plausible asymptotic
states of the system can be determined by considering the appropriate fibers of
the stable and unstable foliations or suitable sections of the asymptotic
manifolds to their compact invariant subsets. In this context, we explore how
tipping induced by a change of phase--namely,
how a phase shift affects the evolution of the system--is encoded in  
the unstable manifolds of compact invariant subsets of the past equation.
We further connect the tipping phenomenon to the splitting of invariant
manifolds and identify certain system reversibilities able to prevent tipping.
The theoretical framework we use clarifies the relationships among the various
notions of tipping induced by phase-change introduced in the literature.
\end{abstract}

\keywords{Asymptotically constant maps, nonautonomous dynamical systems, splitting, phase-induced tipping}
\subjclass{37B55, 34C45, 37D20, 39A60}
\renewcommand{\subjclassname}{\textup{2020} Mathematics Subject Classification}

\maketitle
\section{Introduction}
Nonautonomous dynamical systems provide a natural framework for modeling processes whose governing laws vary in time \cite{KloedenPotzsche2013}.
An important subclass is formed by asymptotically autonomous systems \cite{Markus1956}, for which the time-dependent dynamics converge, in the distant past and future, to autonomous limiting systems.
These systems are particularly relevant to the theory of critical transitions \cite{apw2017}, where the emphasis is placed on the behavior of local pullback attractors.
Although the asymptotic limit problems are autonomous, the transition between them may generate dynamical phenomena that cannot be inferred from either limit system separately.
Consequently, their analysis may require genuinely nonautonomous notions of attraction, repulsion, and bifurcation \cite{AnagnostopoulouPotzscheRasmussen2023,rasmussen2007,rasmussen2008}.

A powerful geometric approach to such problems under certain additional conditions was systematically developed by \cite{wieczorek2021}, with the underlying compactification idea already present in \cite{AlexanderGardnerJones1990} and a related formulation appearing in \cite{AshwinWieczorekVitoloCox2012}.
For asymptotically autonomous differential equations, time is compactified and the original nonautonomous problem is embedded into an autonomous dynamical system on an extended phase space.
The past and future limit systems then appear as invariant boundary subsystems of the compactified problem.
We will work under assumptions ensuring that the compactification is sufficiently smooth and that the invariant hyperplanes corresponding to the past and future limits are uniformly hyperbolic.
In this setting, asymptotic solutions of the original equation can be suitably studied by means of invariant set and invariant manifold methods in the extended autonomous system.

We use this procedure to study nonautonomous recurrences
\[
x_{n+1}=g\left(\Lambda(n),\,x_n\right)\,,\quad x_n\in\R^d\,,
\]
where $\Lambda\colon\R\to\R$ is asymptotically constant, not necessarily monotone or one-to-one,
such that the right-hand side of the recurrence converges, as $n\to-\infty$ and
$n\to+\infty$, to autonomous past and future limit maps $g(\lb_-,x)$ and $g(\lb_+,x)$,
where $\lambda_\pm = \lim_{n \rightarrow \pm \infty} \Lambda(n) \in \R$.
To compactify the time variation, we first regard the discrete index $n$ as the sampling, at integer times, of a continuous variable $t\in\R$, whose evolution is given by the translation $t\mapsto t+1$.
A diffeomorphism
$\Theta\colon\R\to(0,1)$ is used to set $\tau=\Theta(t)$, and define 
$A(\tau)=\Theta\left(\Theta^{-1}(\tau)+1\right)$
and $\tilde\Lambda(\tau)=\Lambda\left(\Theta^{-1}(\tau)\right)$ for $\tau\in(0,1)$.
Both $A$ and $\tilde\Lambda$ can be continuously extended to $[0,1]$.
The original recurrence can therefore be written as
\[
(\tau_{n+1},x_{n+1})=F(\tau_n,x_n)\,,
\]
where the triangular map $F\colon[0,1]\times\R^d\to[0,1]\times\R^d$ is given by
\[
F(\tau,x)=\big(A(\tau),\,g(\tilde\Lambda(\tau),x)\big)\,.
\]
Thus, questions concerning solutions that are asymptotic to 
orbits of the limit maps can be reformulated as questions concerning invariant  manifolds
of the map $F$.

Under the well-established conditions briefly summarized in Section~\ref{sec:2framework}, the time compactified system $F$ realizes the asymptotic dynamics of the original recurrence on the normally  hyperbolic invariant hyperplanes $\{\tau=0\}$ and $\{\tau=1\}$.
Consequently, the dynamics of $F$ can be related with the dynamics on these hyperplanes via
the stable and unstable foliation theorems for normally
hyperbolic invariant manifolds \cite{eldering2013,fenichel1974,hirsch1977}.
In particular, we discuss how these results characterize the unique unstable invariant manifold associated with a compact invariant set of the past limit map and, dually, the unique stable invariant manifold associated with a compact invariant set of the future limit map.
Under certain conditions, this characterization applies to past compact invariant sets that are not the $\upalpha$-limit set of any point outside the set, and to future compact invariant sets that are not the $\upomega$-limit set of any exterior point.
The approach thus identifies distinguished invariant manifolds associated with asymptotic invariant objects of the past system, even in situations when no local pullback attractors emanate from those objects.
It may therefore be viewed as a modest extension of the usual pullback-attractor viewpoint to invariant objects with more general stability properties.
 Within the usual pullback-attractor viewpoint, tipping could be identified with the loss of stability of a local pullback attractor; in the present setting, we instead identify it with a change in the $\upomega$-limit sets of trajectories on the relevant unstable manifold.
Global bounded solutions connecting prescribed past and future asymptotic dynamics can then be described through intersections of the corresponding unstable and stable manifolds in the extended phase space~\cite{wieczorek2023}.

 Asymptotically autonomous systems may exhibit different types of tipping behavior~\cite{AshwinWieczorekVitoloCox2012}: rate-induced, phase-induced, bifurcation-induced, noise-induced...
 For asymptotically autonomous maps, rate-induced tipping was studied in~\cite{kiers2020}.
  In this work, we use the previous geometric viewpoint and focus specifically on how tipping outcomes depend on phase.
Phase tipping was introduced to describe situations in which a time-dependent perturbation causes a transition only from a proper subset of initial phases on an attracting oscillatory state of the past system~\cite{alk2021}.
A related notion of phase-induced tipping has been considered, where a parameter translates the function $\Lambda$ and thus plays the role of a phase; varying this phase parameter may lead to a transition in the $\upomega$-limit set of distinguished trajectories~\cite{lno1,dno3,LongoNunezObaya2021,longonunezobaya}.
In our setting, the geometric description in terms of the unstable manifold associated with the past invariant set and its interaction with the stable manifolds and basin boundaries of the future dynamics helps to clarify the relationship between these two notions of tipping due to changes in phase in the asymptotically autonomous framework.
The corresponding discussion is included in Section~\ref{subsec:phaseinducedtipping} where we further exploit reversibility \cite{lamb1998} to provide information about tipping or to preclude it. Reversible dynamical systems possess a time-reversing symmetry that relates forward and backward dynamics and, under suitable assumptions, ensures the existence of heteroclinic connections between known past and future invariant objects.
Some continuous-time analogues of the biasymptotic constructions considered here  were discussed in
\cite{scarcella2024,scarcella2024-2} 
where, following the ideas in \cite{canadell2015},
the existence of nonautonomous invariant KAM tori in the asymptotically autonomous Hamiltonian setting was considered.

We complement the theoretical results with a collection of analytical and numerical examples illustrating the notions introduced above and the invariant-manifold mechanisms underlying them.
The relevant invariant manifolds are computed and represented using the parameterization method \cite{mamotreto,harodelallave2006}.
In particular, 
a simple example in Section~\ref{sec:41numericalevidenceandresults} 
shows how the splitting of a separatrix can be interpreted as a phase-induced tipping phenomenon: variations of the relevant phase modify the intersection pattern of the associated invariant manifolds and thereby determine  the possible $\upomega$-limits of the trajectories.
This interpretation is related to the broader study of separatrix splitting under 
time-periodic perturbations \cite{fonsimvie19,murillo}.
In our setting, the invariant manifold splitting is
reformulated
in terms of phase-dependent changes in the $\upomega$-limit set of distinguished trajectories.
Further examples in Section~\ref{sec:42surgicalintersections} distinguish the different notions of phase-dependent tipping considered in the literature, showing that they correspond to
related but not generally equivalent mechanisms.

\section{Framework and invariant manifolds} \label{sec:2framework}
In this section, we present a framework to study some invariant structures for
the dynamical system induced by the iteration of a time-dependent map that is
asymptotically autonomous whenever time-compactification is possible.
More specifically, starting from given  compact invariant sets
of the limiting systems, typically invariant tori, we investigate the existence
and properties of asymptotic, as time tends to $\infty$ or $-\infty$, invariant
manifolds to them.

In the spirit of \cite{apw2017,kiers2020}, for $r\geq1$, we consider a
$\lb$-parametric family of recurrence equations given by a $C^r$ map
$g\colon\mI\times\R^d\to\R^d$, where $\mI$ is a real interval.
For each $\lb\in\mI$, we consider an autonomous recurrence relation
\begin{equation}
    \label{eq:2frozen_equation}
    x_{n+1}=g(\lb,x_n)\,,\quad x_n\in\R^d\,,
\end{equation}
where we assume $g(\lb,\cdot)$ to be $C^r$ invertible for every $\lb\in\mI$.
To introduce a variation on the parameter $\lb$ as $n$ varies, we replace $\lb$ with a $C^r$ function $\Lambda:\R\to\mI$ such that there exist two parameter values $\lb_-,\lb_+\in\mI$ satisfying
\[
\lim_{t\to\pm\infty}\Lambda(t)=\lb_\pm\,,\quad\lim_{t\to\pm\infty}\Lambda'(t)=0\,.
\]
Replacing the parameter $\lb$ with the map $\Lambda$ in \eqref{eq:2frozen_equation}, we obtain the asymptotically constant recurrence equation
\begin{equation}
    \label{eq:2nonautonomous_Lambda}
    x_{n+1}=g(\Lambda(n),\,x_n)\,,\quad x_n\in\R^d\,,
\end{equation}
whose asymptotic limits as $n\to\pm\infty$, which we call the past and future equations of \eqref{eq:2nonautonomous_Lambda}, are given by
\begin{equation}\label{eq:2autonomous_limits}
x_{n+1}=g(\lb_-,x_n)\quad\text{and}\quad x_{n+1}=g(\lb_+,x_n)\,.
\end{equation}
To describe the dynamics of \eqref{eq:2nonautonomous_Lambda}, we introduce the triangular map
\begin{equation}
    \label{eq:2trivial_skewproduct}
h:\R\times\R^d\to \R\times\R^d\,,
\qquad
h(t,x)=\bigl(t+1,\,g(\Lambda(t),x)\bigr)\,,
\end{equation}
whose invertibility follows from that of $g(\lb,\cdot)$ for all $\lb\in\mI$, and
whose inverse can then be shown to be of class $C^r$.
In this formulation, the original nonautonomous recurrence
\eqref{eq:2nonautonomous_Lambda} is viewed as the iteration of the map $h$ on
the invariant set $\Z\times\R^d$ of the extended phase space $\R\times\R^d$:
\[
(n,\,x_{n})=h^{n}(0,x_0)\,,\quad n\in\Z\,.
\]
A key point in this paper is that the extended phase space $\R\times\R^d$ of the map $h$ contains, in addition to the invariant set $\mathbb{Z}\times\mathbb{R}^d$, 
on which equation \eqref{eq:2nonautonomous_Lambda} is recovered as explained, 
many other invariant sets of the form $(\mathbb{Z}+c)\times\mathbb{R}^d$ for $c\in\R$, on which recurrence equations of the form
\begin{equation}\label{eq:2phaseinducedtipping}
x_{n+1}=g(\Lambda(n+c),x_n)\,,
\end{equation}
govern the evolution.
The study of the changes in  the dynamics of \eqref{eq:2phaseinducedtipping} as
the parameter $c$ varies is a problem known as \emph{phase-induced tipping},
which has appeared in the recent literature on critical transitions as a
possible mechanism capable of producing abrupt changes in the dynamics of
certain systems under small perturbations \cite{dno3,dno4,longonunezobaya}.
The study of the dynamical system generated by $h$  
allows the simultaneous analysis of all the equations
\eqref{eq:2phaseinducedtipping} arising from a phase shift in the function
$\Lambda$.
The analysis for $c\in\R$ can in fact be reduced to the case $c\in[0,1)$, since
the equation obtained under the change $c\mapsto c+1$ is the same, up to a shift
in the indexing of $n$.
In Section~\ref{subsec:phaseinducedtipping}, we present some relations of this
concept with those of \emph{phase tipping} \cite{alk2018,alk2023,alk2021}.

Following~\cite{wieczorek2021}, we study the dynamics of
\eqref{eq:2nonautonomous_Lambda}
through a compactification of the first variable of the phase space $\R\times\R^d$ of the map $h$. 
As in later sections, we will use the parameterization method (see, for example,
\cite{mamotreto}) to obtain accurate approximations of the relevant invariant
manifolds of compact subsets of such system, we are then interested in providing
a compactification that is at least of class $C^r$.

To this end,  given a diffeomorphism $\Theta\colon\R\to(0,1)$, we define
\begin{equation}\label{eq:torecall2definitionLambdaHat}
\begin{split}
\hat\Lambda\colon [0,1]&\to\R\\
\tau&\mapsto\left\{\begin{array}{ll}
\lb_-\,,&\text{if }\tau=0\,,\\
\Lambda\circ\Theta^{-1}(\tau)\,,\,\;&\text{if }\tau\in(0,1)\,,\\
\lb_+\,,&\text{if }\tau=1\,.\\
\end{array}\right.
\end{split}
\end{equation}
and also
\begin{equation}\label{eq:torecall3definitionA}
\begin{split}
A\colon [0,1]&\to[0,1]\\
\tau&\mapsto\left\{\begin{array}{ll}
0\,,&\text{if }\tau=0\,,\\
\Theta(\Theta^{-1}(\tau)+1)\,,&\text{if }\tau\in(0,1)\,,\\
1\,,&\text{if }\tau=1\,.\\
\end{array}\right.
\end{split}
\end{equation}

\begin{prop}
\label{prop:2smoothness}
Let $r\geq 1$, let $\Theta$ be a $C^r$ diffeomorphism, and let
\[
L_\Theta := \frac1{\Theta'(t)}\frac{d}{dt}\,.
\]
The following assertions hold:
\begin{itemize}
    \item[{\rm(i)}]
    If for every  $1\leq k\leq r$ , the limits
    \[
    a_k^-:=\lim_{t\to-\infty}L_\Theta^k\,\Lambda(t)\,,
    \qquad
    a_k^+:=\lim_{t\to+\infty}L_\Theta^k\,\Lambda(t)
    \]
    exist and are finite, then $\hat\Lambda$ is of class $C^r$.
    Moreover, whenever these conditions hold, one has $\hat\Lambda(0)=\lb_-$, $\hat\Lambda(1)=\lb_+$ and
    \[
    \hat\Lambda^{(k)}(0)=a_k^-,
    \qquad
    \hat\Lambda^{(k)}(1)=a_k^+
    \qquad \text{for all } 1\le k\le r.
    \]

    \item[{\rm(ii)}] 
    If for every  $1\leq k\leq r$, the limits
    \[
    b_k^-:=\lim_{t\to-\infty}L_\Theta^k\,\big(\Theta(\cdot+1)\big)(t)\,,
    \qquad
    b_k^+:=\lim_{t\to+\infty}L_\Theta^k\,\big(\Theta(\cdot+1)\big)(t)
    \]
    exist and are finite, then $A$ is of class $C^r$.
\end{itemize}
\end{prop}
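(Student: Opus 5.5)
The plan rests on one observation: on $(0,1)$ the operator $L_\Theta$ is just differentiation in the variable $\tau=\Theta(t)$. Indeed, if $f\colon(0,1)\to\R$ is $C^1$ and $\phi=f\circ\Theta$, the chain rule gives $\phi'(t)=f'(\Theta(t))\,\Theta'(t)$, and since $\Theta$ is a diffeomorphism, $\Theta'(t)\neq0$. Hence $f'\circ\Theta=L_\Theta\phi$. Applying this with $f=\hat\Lambda|_{(0,1)}$ and $\phi=\Lambda$ (which is $C^r$), induction on $k\le r$ gives
\[
\hat\Lambda^{(k)}(\Theta(t))=L_\Theta^k\Lambda(t)\,,\qquad t\in\R\,,\ 1\le k\le r\,.
\]
For the induction step we use that $L_\Theta^{k-1}\Lambda$ is $C^{r-k+1}$ when $\Theta\in C^r$, so $\hat\Lambda^{(k-1)}$ is again differentiable and the chain rule can be applied once more. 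Thus $\hat\Lambda$ is $C^r$ on the open interval, and the hypothesis of (i) states exactly that $\lim_{\tau\to0^+}\hat\Lambda^{(k)}(\tau)=a_k^-$ and $\lim_{\tau\to1^-}\hat\Lambda^{(k)}(\tau)=a_k^+$.

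We take $\Theta$ increasing, so that $t\to\mp\infty$ corresponds to $\tau\to0^+$ and $\tau\to1^-$ respectively. If $\Theta$ were decreasing, the roles of the endpoints would swap. Since $\hat\Lambda(0)=\lb_-$ is already prescribed in \eqref{eq:torecall2definitionLambdaHat}, the increasing orientation is the one implicit in the statement.

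The main step is to pass from one-sided limits of derivatives on the open interval to genuine one-sided derivatives at the endpoints. This is the classical extension lemma: if $f$ is continuous on $[0,\ep)$, differentiable on $(0,\ep)$, and $f'(\tau)\to a$ as $\tau\to0^+$, then $f'(0)$ exists and equals $a$. It follows from the mean value theorem, since $(f(\tau)-f(0))/\tau=f'(\xi_\tau)$ for some $\xi_\tau\in(0,\tau)$. We apply the lemma inductively, with $f=\hat\Lambda^{(k-1)}$ and $a=a_k^-$, for $k=1,\dots,r$.
\begin{itemize}
\item For $k=1$, continuity of $\hat\Lambda$ at $0$ is $\lim_{t\to-\infty}\Lambda(t)=\lb_-$.
\item For $k\ge2$, continuity of $\hat\Lambda^{(k-1)}$ at $0$ is the previous step together with the existence of the limit $a_{k-1}^-$.
\end{itemize}
This yields $\hat\Lambda^{(k)}(0)=a_k^-$, and the final derivative $\hat\Lambda^{(r)}$ is continuous at $0$ because its limit there exists and equals its value. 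The endpoint $1$ is treated symmetrically. This proves (i), together with the stated values of the derivatives at $0$ and $1$.

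Part (ii) follows from (i) applied to a different function. On $(0,1)$ we have $A=\Psi\circ\Theta^{-1}$ with $\Psi(t):=\Theta(t+1)$, which is $C^r$, and $A(0)=0$, $A(1)=1$. The zeroth-order limits needed in the argument are
\[
\lim_{t\to-\infty}\Theta(t+1)=0\,,\qquad \lim_{t\to+\infty}\Theta(t+1)=1\,,
\]
which hold automatically because $\Theta$ is an increasing diffeomorphism onto $(0,1)$. Replacing $\Lambda$ by $\Psi$ and $a_k^\pm$ by $b_k^\pm$ in the argument above then shows that $A$ is $C^r$ on $[0,1]$, with $A^{(k)}(0)=b_k^-$ and $A^{(k)}(1)=b_k^+$.

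The only delicate points are bookkeeping: keeping track of the regularity of $L_\Theta^k\Lambda$, so that the chain rule applies at each stage, and checking continuity at the endpoints before invoking the mean value theorem. Nothing beyond elementary real analysis is required.
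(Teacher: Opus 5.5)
Your proposal is correct and takes essentially the same route as the paper. You establish the chain-rule identity $\hat\Lambda^{(k)}=(L_\Theta^k\Lambda)\circ\Theta^{-1}$ on $(0,1)$ and then extend the derivatives to the endpoints, and your explicit mean-value-theorem lemma is exactly what justifies the paper's terser step ``continuity allows to conclude that $F_k'=F_{k+1}$ on $[0,1]$''. Your remarks that $\Theta$ must be increasing, and that (ii) is (i) applied to $\Psi(t)=\Theta(t+1)$, make explicit what the paper leaves implicit.
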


\begin{proof}
(i) For $\tau\in(0,1)$, the chain rule gives
\[
\frac{d}{d\tau}\bigl(\Lambda\circ\Theta^{-1}(\tau)\bigr)=\frac{1}{\Theta'(\Theta^{-1}(\tau))}\,\Lambda'(\Theta^{-1}(\tau)
=(L_\Theta\,\Lambda)(\Theta^{-1}(\tau))\,,
\]
and, by induction, 
\[
\frac{d^k}{d\tau^k}\bigl(\Lambda\circ\Theta^{-1}(\tau)\bigr)
=(L_\Theta^k\,\Lambda)(\Theta^{-1}(\tau))
\qquad \text{for all }0\le k\le r.
\]

For each $0\le k\le r$, we define
\[
F_k(\tau):=
\begin{cases}
a_k^-\,, & \tau=0\,,\\[1mm]
(L_\Theta^k\,\Lambda)(\Theta^{-1}(\tau))\,, & \tau\in(0,1)\,,\\[1mm]
a_k^+\,, & \tau=1\,.
\end{cases}
\]
By the assumed existence of the limits $a_k^\pm$, each $F_k$ is continuous on
$[0,1]$, and
\[
F_k'(\tau)=F_{k+1}(\tau)\,,\quad\text{for }\tau\in(0,1),\quad 0\le k\le r-1.
\]
Continuity allows to conclude that $F_k'=F_{k+1}$ on $[0,1]$ for every
$0\le k\le r-1$.
Iterating, it follows that $F_0=\hat\Lambda$ is of class $C^r$, and that
\[
\hat\Lambda^{(k)}=F_k \qquad \text{for all }0\le k\le r.
\]
In particular, $\hat\Lambda(0)=\lambda_-$, $\hat\Lambda(1)=\lambda_+$ and, for $1 \leq k \leq r$, one has
\[
\hat\Lambda^{(k)}(0)=F_k(0)=a_k^-,
\qquad
\hat\Lambda^{(k)}(1)=F_k(1)=a_k^+,
\]
as we wanted to see.
\medskip

(ii) The proof is analogous to that of {\rm(i)}.
\end{proof}

\medskip
\medskip

The specific properties of the compactified system depend on the theoretical framework here described and, hence, on the particular functions $\Theta$ and $\Lambda$ considered.
The function $\Theta$ is defined in the course of the analysis while, in contrast,
the function $\Lambda$ is considered to be given by the physical problem under consideration.

\medskip

{\bf The choice of $\Theta$.}
Reference \cite[Section 4]{wieczorek2021} presents several types of $C^1$-compactifications, some exhibiting exponential  
decay 
while others show only inverse-polynomial decay as $\tau \rightarrow \pm 1$.
From now on, and throughout all the examples considered in this paper, 
we use the compactification defined via the diffeomorphism 
$\Theta\colon\R\to(0,1)$ 
given by
\begin{equation}\label{eq:2Thetadefinition}
\Theta(t)=\frac1{1+e^{-\alpha\,t}}\,,\qquad\Theta^{-1}(\tau)=\frac1{\alpha}\log\left(\frac{\tau}{1-\tau}\right)\, ,
\end{equation}
for certain $\alpha>0$,
so that the compatified system is expected to show exponential convergence of solutions as $\tau \rightarrow \pm 1$.
Then, for this choice of $\Theta$, one has
\begin{equation}\label{eq:2Adefinition}
A(\tau)=\frac{e^{\alpha}\,\tau}{1+\tau\,(e^\alpha-1)}\,,
\end{equation}
which is in fact real-analytic.
In particular,
\[
A'(\tau)=\frac{e^\alpha}{\big(1+(e^\alpha-1)\,\tau\big)^2}\,,\qquad A'(0)=\frac1{A'(1)}=e^\alpha>1\,.
\]

This choice is particularly useful because it satisfies the symmetry property
\[
A(\tau)+A^{-1}(1-\tau)=1
\]
for all $\tau\in[0,1]$.

\medskip

{\bf On the role of $\Lambda$.}
As said, the function $\Lambda$ is regarded as given by the physical problem in question. However, the
fulfillment of the hypotheses of Proposition \ref{prop:2smoothness}(i) depends on $\Lambda$ and may restrict the possible values of $\alpha$. Note that
\[
\Theta_\alpha\big(\Theta^{-1}_{\alpha/k}(\tau)\big)=\frac{\tau^k}{\tau^k+(1-\tau)^k}\quad\text{for }k\in\N\,.
\]
Since the composition $\Theta_\alpha\circ\Theta^{-1}_{\alpha/k}$ is real‑analytic, 
we deduce that if the compactification $\hat\Lambda_\alpha$ of a map $\Lambda$ through $\Theta_\alpha$ is $C^r$, then its compactification $\hat\Lambda_{\alpha/k}$ through $\Theta_{\alpha/k}$ is also $C^r$,
and that if $\hat\Lambda_\alpha$ is real-analytic on $[0,1]$, then $\hat\Lambda_{\alpha/k}$ is real-analytic on $[0,1]$.
The next proposition provides conditions on $\Lambda$ that guarantee that the compactification defined through $\Theta$ in \eqref{eq:2Thetadefinition} is real-analytic.

\begin{prop}
\label{prop:analyticity_logistic_exact}
Let $\alpha>0$ and consider $\Theta$ as in \eqref{eq:2Thetadefinition}.
Then, $\hat\Lambda$ is real-analytic on $[0,1]$ if and only if the following two conditions hold:
\begin{itemize}
\item[{\rm(i)}] $\Lambda$ is real-analytic on $\R$;
\item[{\rm(ii)}] there exist $T_-<0<T_+$ and real-analytic functions $f_-$ and $f_+$ defined in neighborhoods of $0$ such that
\[
\Lambda(t)=f_-(e^{\alpha t}) \quad \text{for } t<T_-\,,
\qquad
\Lambda(t)=f_+(e^{-\alpha t}) \quad \text{for } t>T_+\,.
\]
\end{itemize}
In that case,
\[
\lb_-:=f_-(0)=\lim_{t\to-\infty}\Lambda(t)\,,
\qquad
\lb_+:=f_+(0)=\lim_{t\to+\infty}\Lambda(t)\,.
\]
\end{prop}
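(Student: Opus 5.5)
The plan is to write $\hat\Lambda=\Lambda\circ\Theta^{-1}$ on $(0,1)$ and to express the two ends of $[0,1]$ in terms of the variables $e^{\alpha t}$ and $e^{-\alpha t}$. From \eqref{eq:2Thetadefinition}, $\tau=\Theta(t)$ gives $\frac{\tau}{1-\tau}=e^{\alpha t}$. Near $\tau=0$ the map $s=\phi_-(\tau):=\tau/(1-\tau)$ is a real-analytic diffeomorphism from a neighborhood of $0$ onto a neighborhood of $0$, with $\phi_-(0)=0$, $\phi_-'(0)=1$ and inverse $s\mapsto s/(1+s)$. Near $\tau=1$ the map $\phi_+(\tau):=(1-\tau)/\tau=e^{-\alpha t}$ plays the same role, with inverse $s\mapsto 1/(1+s)$. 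The whole proof is a bookkeeping exercise with these two local analytic charts.

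\emph{Sufficiency.} Assume (i) and (ii). On $(0,1)$ the function $\hat\Lambda$ is the composition of the analytic $\Lambda$ with the analytic $\Theta^{-1}(\tau)=\frac1\alpha\log\frac{\tau}{1-\tau}$, so it is analytic there. For $\tau\in(0,\Theta(T_-))$, condition (ii) gives $\hat\Lambda(\tau)=f_-(e^{\alpha\Theta^{-1}(\tau)})=f_-(\tau/(1-\tau))$. The right-hand side $f_-\circ\phi_-$ is analytic on a full neighborhood of $\tau=0$, with value $f_-(0)$ at $\tau=0$. This value equals $\lim_{t\to-\infty}\Lambda(t)=\lb_-$, by continuity of $f_-$ and because $e^{\alpha t}\to0$. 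Hence $\hat\Lambda$ coincides on $[0,\Theta(T_-))$ with the restriction of an analytic function, including at $\tau=0$ by definition \eqref{eq:torecall2definitionLambdaHat}. The endpoint $\tau=1$ is handled the same way using $f_+\circ\phi_+$. This also proves the final identities $\lb_\pm=f_\pm(0)$.

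\emph{Necessity.} Assume $\hat\Lambda$ is real-analytic on $[0,1]$, meaning it extends analytically to an open neighborhood of $[0,1]$. Then $\Lambda=\hat\Lambda\circ\Theta$ on $\R$ is analytic as a composition of analytic maps, which gives (i). For (ii), let $\tilde\Lambda$ be an analytic extension of $\hat\Lambda$ to $(-\delta,\delta)$ and define $f_-(s):=\tilde\Lambda(s/(1+s))$ on a neighborhood of $0$. This $f_-$ is analytic, and for $t$ sufficiently negative, i.e.\ $t<T_-$ with $\Theta(T_-)<\delta$, we get $f_-(e^{\alpha t})=\hat\Lambda(\Theta(t))=\Lambda(t)$. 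The definition $f_+(s):=\tilde\Lambda(1/(1+s))$ works symmetrically near $\tau=1$. We can arrange $T_-<0<T_+$ by shrinking, since only sufficiently large $|t|$ matter.

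\emph{Main obstacle.} The only delicate point is being precise about what ``real-analytic on the closed interval $[0,1]$'' means at the endpoints. The cleanest reading is the existence of an analytic extension to an open neighborhood; this is equivalent to local convergent power series at $0$ and $1$. With that convention the argument is just that $\phi_\pm$ are analytic local diffeomorphisms fixing $0$ (resp.\ sending $1\mapsto0$). Consequently, analyticity of $\hat\Lambda$ at an endpoint is equivalent to analyticity of $\Lambda$ written in the variable $e^{\mp\alpha t}$. I would state this convention explicitly at the start of the proof.
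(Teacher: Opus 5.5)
Your proposal is correct and follows the paper's proof essentially step by step. It obtains (i) from $\Lambda=\hat\Lambda\circ\Theta$; for necessity it defines $f_-(z)=\hat\Lambda\bigl(z/(1+z)\bigr)$ and $f_+(z)=\hat\Lambda\bigl(1/(1+z)\bigr)$; for sufficiency it writes $\hat\Lambda(\tau)=f_-\bigl(\tau/(1-\tau)\bigr)$ near $\tau=0$, and symmetrically near $\tau=1$. Your explicit convention (analyticity on $[0,1]$ means analytic extension to an open neighbourhood) and your continuity check that $f_\pm(0)=\lb_\pm$ only make precise what the paper leaves implicit.
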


\begin{proof}
Assume first that $\hat\Lambda$ is real-analytic on $[0,1]$. Since $\Theta$ is real-analytic from $\R$ into $(0,1)$, we have $\Lambda=\hat\Lambda\circ\Theta$, hence $\Lambda$ is real-analytic on $\R$.

Moreover, since $\hat\Lambda$ is analytic at $0$, one gets
\[
\Lambda(t)=\hat\Lambda(\Theta(t))=\hat\Lambda\left(\frac{e^{\alpha\,t}}{1+e^{\alpha\,t}}\right)\,,
\]
so $f_-(z):=\hat\Lambda(\frac{z}{1+z}\bigr)$, which is analytic near $0$.
Similarly, $f_+(z):=\hat\Lambda\!\bigl(\frac1{1+z}\bigr)$ is analytic near $0$. This proves (ii). The identities $\lb_-=f_-(0)$ and $\lb_+=f_+(0)$ are immediate.

Conversely, assume (i) and (ii). Since $\Lambda$ and $\Theta^{-1}$ are real-analytic on $(0,1)$, the function $\hat\Lambda(\tau)=\Lambda(\Theta^{-1}(\tau))$ is real-analytic on $(0,1)$.

Near $\tau=0$, using $\exp(\alpha\,\Theta^{-1}(\tau))=\tau/(1-\tau)$, condition (ii) gives
\[
\hat\Lambda(\tau)=f_-\!\left(\frac{\tau}{1-\tau}\right)
\]
for $\tau>0$ small. The right-hand side is analytic at $\tau=0$, and its value at $0$ is $f_-(0)=\lb_-$. Hence, $\hat\Lambda$ is analytic at $0$.
The proof is analogous close to $\tau=1$.
\end{proof}

\medskip

{\bf The compactified system.}
Once a proper choice of $\Theta$ is done, that is, assuming that the hypotheses
of Proposition \ref{prop:2smoothness} hold, we define the $C^r$ map
\begin{equation}\label{eq:torecall1definitionG}
G\colon[0,1]\times\R^d\to\R^d,\quad G(\tau,\bx)=g(\hat\Lambda(\tau),\bx)\,,
\end{equation}
and translate \eqref{eq:2trivial_skewproduct} to a dynamical system on
$[0,1]\times\R^d$, generated by the iterations of the map
\begin{equation}\label{eq:compactified}
\begin{split}
F\colon [0,1]\times\R^d&\to [0,1]\times\R^d\\
\left(\tau,\bx\right)&\mapsto \left(A(\tau)\,,\;G(\tau,\bx)\right)\,,
\end{split}
\end{equation}
where the planes $\{\tau=0\}$ and $\{\tau=1\}$ are invariant.

Recall that these hyperplanes encode the system's past and future limit
dynamics. Under the previously considered assumptions on the compactification
framework, they are normally hyperbolic attracting/repelling hyperplanes. 
Because they play a role in tipping phenomena (see Section~\ref{subsec:phaseinducedtipping}),
we are interested in invariant compact subsets of the limit systems and in the
foliations of their invariant stable and unstable manifolds.
To construct explicit examples with concrete tipping properties, we focus  on
invariant tori of the limit systems, since there are well-established methods to
compute them and their invariant manifolds.

Concretely, for $m<d$ and a frequency vector $\w\in\mathbb{R}^m$,
let $T_\w$ be the rigid rotation  on $\T^m$ of angle $\w$, that is
$T_\w(\theta)=\theta+\w$ with $\w \in \T^m$.
For concreteness, we consider an invariant torus of the past equation \eqref{eq:2autonomous_limits} that is,
we consider an $m$-dimensional embedded torus $\mK\subseteq\R^d$
bearing internal dynamics conjugated to $T_\omega$
and invariant with respect to  $\bx\mapsto G(0,\bx) = g(\lb_-,\bx)$.
The last invariance condition means that
\[
\mK_-:=\{0\}\times\mK
\]
is an invariant torus under $G$ (that is, of the extended system). 
Equivalently, 
the torus $\mK$ is  the image of an embedding
\[
K:\mathbb{T}^m \to \mathbb{R}^d\,,
\]
that is,  $\mK=K(\T^m)$,  that is a solution of the invariance equation
\[
G(0,\cdot)\circ K = K\circ T_\omega\,.
\]

The geometric mechanism behind the next result, which is the foliation of the
unstable manifold, already appears in Fenichel's rate-condition theory
\cite{fenichel1974}.
A particularly clear flow-based exposition in the continuous case  can be found in
\cite[Theorem 5.6.1]{wiggins}, \cite[Theorem 4.1]{pesin} and \cite[Theorem 4.3]{nippstoffer}.
See~\cite{hirsch1977} for a classical approach to the  discrete-time normally
hyperbolic invariant manifold  theory.

\begin{teor}\label{teor:2main_result}
Let $\ell\geq1$, and let
\[
F:[0,1]\times\mathbb R^d\to[0,1]\times\mathbb R^d,
\qquad
F(\tau,x)=(A(\tau),G(\tau,x)),
\]
be a $C^{\ell+1}$ diffeomorphism. Set $g_0(x):=G(0,x)$.
Assume that:

\begin{enumerate}
\item[\rm(H1)] $A(0)=0$ and $A'(0)>1$.

\item[\rm(H2)] $\mK\subset\mathbb R^d$ is a compact $g_0$-invariant $C^{\ell+1}$
embedded manifold.
\item[\rm(H3)] There exists a neighbourhood
$\mV\subset\mathbb R^d$ of $\mK$ such that the $\upalpha$-limit set of every point of $\mV\setminus\mK$ is not contained in $\mK$.
\item[\rm(H4)] There exist $\tau_0>0$ and a compact neighbourhood
$\mU\subset\R^d$ of $\mK$ such that 
$U_0=[0,\tau_0]\times\mU$
verifies
$F^{-1}(U_0)\subseteq U_0$ and
\[
\sup_{y\in\mU}
\|Dg_0(y)\|
\max\left\{
1,\,
\left(\sup_{y\in\mU}\|DF^{-1}(0,y)\|\right)^{\,\ell}
\right\}
<A'(0)\,.
\]
\end{enumerate}
Then, there exist $\delta\in(0,\tau_0]$ and a $C^\ell$ unstable fiber map $W^u:[0,\delta)\times\mK\to\mathbb R^d$ such that
\[
\Psi:[0,\delta)\times\mK\to[0,\delta)\times\mathbb R^d,
\qquad
\Psi(\tau,x):=\bigl(\tau,\,W^u(\tau,x)\bigr)\,,
\]
is a $C^\ell$ embedding.
If we define 
$\mK_-:=\{0\}\times\mK$ and
\[
\mW^u_{\mathrm{loc}}(\mK_-)
:=
\Bigl\{
\bigl(\tau,W^u(\tau,x)\bigr)
\;\Big|\;
\tau\in(0,\delta),\ x\in\mK
\Bigr\}\,,
\]
the following properties hold:

\begin{enumerate}
\item[\rm(i)] For every $x\in\mK$, $W^u(0,x)=x$.
\item[\rm(ii)] For every
$\tau\in(0,\delta)$ and every $x\in\mK$
\begin{equation}\label{eq:Winvariancequation}
F^{-1}\bigl(\tau,W^u(\tau,x)\bigr)
=
\bigl(A^{-1}(\tau),W^u(A^{-1}(\tau),g_0^{-1}(x))\bigr)\,.
\end{equation}
\item[\rm(iii)] $\displaystyle{
\mW^u_{\mathrm{loc}}(\mK_-)
=
\bigl\{
z\in(0,\delta)\times\mathbb R^d
\;\big|\;
\lim_{n\to\infty}\dist\bigl(F^{-n}(z),\{0\}\times\mK\bigr)=0
\bigr\}}$.

\item[\rm(iv)] For every $\tau\in(0,\delta)$, the map
$\mK\to[0,1]\times\R^d$, $x\mapsto\bigl(\tau,W^u(\tau,x)\bigr),$
is a $C^\ell$ diffeomorphism onto its image.
\end{enumerate}

Moreover, $W^u$ is unique among $C^\ell$ maps satisfying the above
dynamical characterization.

In addition, if $\mK$ is a torus and the dynamics in $(\mK,g_0)$ is conjugate to
a rigid rotation, then for any $(\tau,x)\in(0,\delta)\times\mK$, $W^u(\tau,x)$
is the unique element $y\in\R^d$ satisfying
\begin{equation}\label{eq:asymptoticpair}
\lim_{n\to\infty}\dist\Big(F^{-n}\big(\tau,y\big)\,,\big(0,g_0^{-n}(x)\big)\Big)=0\,.
\end{equation}
\end{teor}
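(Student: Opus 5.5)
We view $\mK_-=\{0\}\times\mK$ inside the invariant hyperplane $\{\tau=0\}$ and treat the $\tau$-direction as the only normally expanding direction. Under $F^{-1}$ this direction is uniformly contracting at rate $A'(0)^{-1}<1$, which suggests a graph transform / Lyapunov–Perron construction for $F^{-1}$ on $U_0$. We seek $W^u$ as a fixed point of
\[
(\mathcal T W)(\tau,x):=G\bigl(A^{-1}(\tau),\,W(A^{-1}(\tau),g_0^{-1}(x))\bigr),
\]
which is exactly the invariance equation \eqref{eq:Winvariancequation} rewritten forward. The space is $\mathcal X=\{W\in C^0([0,\delta)\times\mK,\mU)\;:\;W(0,\cdot)=\mathrm{id}_\mK\}$, equipped with the weighted norm $\sup_{\tau>0}\tau^{-1}|W(\tau,x)-x|$. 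Since $g_0$ permutes $\mK$, the condition $W(0,x)=x$ is preserved by $\mathcal T$; this gives (i). By (H4) ($F^{-1}(U_0)\subseteq U_0$), $\mathcal T$ maps $\mathcal X$ into itself once $\delta$ is small.

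For the contraction, the difference of two images is controlled by $\sup\|Dg_0\|\cdot|W_1-W_2|(A^{-1}\tau,\cdot)$. In the weighted norm the factor $A^{-1}(\tau)/\tau\to A'(0)^{-1}$ appears, so the Lipschitz constant is about $\sup\|Dg_0\|/A'(0)<1$ by (H4), using the first entry of the max. Near $\tau=0$ we replace $G(\tau,\cdot)$ by $g_0$ up to an $O(\tau)$ error, shrinking $\delta$. This yields a unique continuous $W^u$.

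Smoothness comes from the fiber contraction theorem applied successively to the $k$-jets, $k\le \ell$. The equation for $D^kW$ involves $D^kW\circ(A^{-1},g_0^{-1})$ multiplied by $Dg_0\cdot\|Dg_0^{-1}\|^k$, together with lower-order terms. Its contraction rate is precisely $\sup\|Dg_0\|\,(\sup\|DF^{-1}\|)^{k}/A'(0)<1$, which is the remaining part of the rate condition in (H4). Since $F\in C^{\ell+1}$, the coefficients are $C^\ell$, and we obtain $W^u\in C^\ell$. Property (iv) and the embedding property of $\Psi$ then follow because $W^u(\tau,\cdot)$ is $C^1$-close to the inclusion $\mK\hookrightarrow\R^d$ for small $\tau$, and embeddings are open; shrink $\delta$ if necessary.

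For (iii), the inclusion ``$\subseteq$'' follows from invariance: backward orbits have $\tau_n=A^{-n}(\tau)\to0$, and $W^u(\tau_n,\cdot)$ converges to the identity on $\mK$. For ``$\supseteq$'', take $z$ whose backward orbit accumulates on $\mK_-$. The key step is to show that a point of $U_0$ off the manifold $\Psi$ must have its backward orbit leave a neighbourhood of $\mK_-$, or else its limit in $\{\tau=0\}$ would be a backward orbit of $g_0$ in $\mV\setminus\mK$ with $\upalpha$-limit inside $\mK$, contradicting (H3). Concretely, we compare $z$ with $\Psi(\tau,x)$ and use the expansion of the transverse difference under $F^{-1}$ (a cone argument) or a limiting argument in the hyperplane. \textbf{This is the main obstacle:} (H3) is only a topological, not hyperbolic, hypothesis, so I would first try a compactness argument. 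Suppose $z_k\to\mK_-$ with $z_k\notin\Psi$. Taking limits of backward orbit segments produces a point of $\{0\}\times(\mV\setminus\mK)$ whose whole backward $g_0$-orbit stays near $\mK$. A further limit, using that $\mK$ is isolated from such orbits, gives the contradiction. Uniqueness of $W^u$ is immediate from (iii): any other map with the same characterization has the same image, and on each fiber the correspondence with $x$ is fixed by (ii).

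Finally, in the torus case with rotation dynamics, both sequences $F^{-n}(\tau,W^u(\tau,x))$ and $(0,g_0^{-n}(x))$ are described in fiber coordinates via (ii). Their distance is $|W^u(A^{-n}\tau,g_0^{-n}x)-g_0^{-n}x|\le C A^{-n}(\tau)\to0$, which gives existence in \eqref{eq:asymptoticpair}. For uniqueness, suppose $y$ satisfies \eqref{eq:asymptoticpair}. By (iii) we have $y=W^u(\tau,x')$ for some $x'\in\mK$, and then $\dist(g_0^{-n}x,g_0^{-n}x')\to0$. Since a rigid rotation is an isometry in the conjugating coordinates, this distance is constant in $n$, so $x=x'$.
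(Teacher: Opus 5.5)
\paragraph{Existence and smoothness.} Building $W^u$ as the fixed point of $\mathcal T$ in a $\tau^{-1}$-weighted space over $[0,\delta)\times\mK$ is a workable alternative to the paper's route. The paper instead applies a graph transform to the slope $u$ of a line field $X=(1,u)$ on all of $U_0$ and then integrates $x'=u_*(\tau,x)$. Items (i), (ii) and (iv) can be obtained your way, and your contraction even gives uniqueness of $W^u$ directly: any $C^1$ map satisfying (i)--(ii) lies in $\mathcal X$ and is fixed by $\mathcal T$.

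One caution concerns the jet equations. The coefficient of $D^k_xW\circ(A^{-1},g_0^{-1})$ is $D_xG\,(Dg_0^{-1})^{\otimes k}$, and for $k=1$ it fixes $D_xW(0,\cdot)$, so by itself it does not contract. The factor $A'(0)^{-1}$ in your claimed rate appears only if you also weight $D^kW(\tau,\cdot)-D^kW(0,\cdot)$ by $\tau^{-1}$. Equivalently, you can work with $\partial_\tau W=u_*(\tau,W)$, which is the slope field the paper contracts.

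\paragraph{The gap in (iii).} The genuine gap is the inclusion ``$\supseteq$'' in (iii). This is the only place where (H3) enters, and your uniqueness of $y$ in the asymptotic-pair assertion rests on it.
\begin{itemize}
\item A cone argument is unavailable, because $F^{-1}$ has no transverse expansion in $x$.
\item The compactness argument does not produce what you need. If $F^{-n}(z)\to\mK_-$, every limit of that backward orbit lies in $\mK_-$, not in $\{0\}\times(\mV\setminus\mK)$.
\item If you instead take $z_k$ at fixed transverse distance from $\Psi$ and pass to a limit, the convergence $F^{-n}(z_k)\to\mK_-$ is not uniform in $k$. At best the limit $g_0$-orbit has its backward orbit staying near $\mK$.
\end{itemize}
That contradicts nothing, because (H3) does not make $\mK$ isolated. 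In the paper's motivating case, $\mK$ is an invariant curve of an area-preserving map surrounded by invariant curves. Every nearby point then keeps its backward orbit near $\mK$ forever, with $\upalpha$-limit on a neighbouring curve.

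What is missing is a way to attach to $z$ an \emph{actual} $g_0$-orbit whose $\upalpha$-limit lies in $\mK$. The paper does this by constructing unstable fibers through every $(0,y)$ with $y$ in a neighbourhood $\mV_0\subseteq\mV$ of $\mK$, not only $y\in\mK$. Then $\Phi(\tau,y)=(\tau,v_0(\tau,y))$ is a diffeomorphism onto a neighbourhood of $\{0\}\times\mK$ satisfying $F^{-1}\circ\Phi=\Phi\circ(A^{-1}\times g_0^{-1})$. The argument then runs as follows:
\begin{itemize}
\item For $n\ge N$ write $F^{-n}(z)=\Phi(\tau_n,y_n)$.
\item The conjugacy forces $y_{N+j}=g_0^{-j}(y_N)$, and $\dist(y_n,\mK)\to0$.
\item Hence $\upalpha(y_N)\subseteq\mK$, so (H3) gives $y_N\in\mK$, and therefore $z\in\mW^u_{\mathrm{loc}}(\mK_-)$.
\end{itemize}
In your framework this means running $\mathcal T$ over $[0,\delta)\times\mU$. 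That is allowed, since $F^{-1}(U_0)\subseteq U_0$ at $\tau=0$ gives $g_0^{-1}(\mU)\subseteq\mU$. You would then need to show that the resulting $\Phi$ covers a full neighbourhood of $\mK_-$.
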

\begin{proof} 
See Appendix~\ref{appendix:proofoftheorem}.
\end{proof}

\medskip

The previous result follows from well-established results on fiber bundles already present in
the literature,  adapted to the particular compactified skew-product structure of our map.
While we relegate the more technical details of the proof to Appendix~\ref{appendix:proofoftheorem}, the following list of remarks might help clarify the role of the assumptions of the theorem and indicate how the general theory is adapted to the particular setting considered here.

\begin{remark}
The general normal hyperbolicity theory establishes the existence of unstable fibers associated with a normally hyperbolic invariant manifold and, under additional bunching assumptions, gives regularity of the dependence of these fibers on the base point.
The reason is that, although the leaves of the foliation are as smooth as the map itself, the dependence of these leaves on the point $x\in\mK$ is more delicate and is constrained by the ratio between the normal and tangential exponents to the normally hyperbolic invariant manifold.
In the general setting, one must also impose conditions to guarantee the required regularity of the invariant manifold itself.
In our case, however, this latter issue does not arise: the invariant manifold is explicitly given, independently of any further bunching condition.
Thus, the usual assumptions involving bounds such as powers of $\|Dg_0(y)\|$ that are needed in the general theory to control the regularity of the invariant manifold are not required here.
Instead, the bunching condition in {\rm(H4)} is used only to make the graph transform for the unstable direction contractive in the $C^\ell$ topology.
This yields a $C^\ell$ dependence of the unstable fibers on the base point $x\in\mK$.

Finally, let us point out that the existence of the backward invariant
neighborhood in {\rm(H4)} is an instrumental assumption.
It is imposed only to have a space on which the graph transform is defined. If
this condition is not satisfied by the
original map, one may perform a smooth cutoff outside a sufficiently small
neighborhood of $\{0\}\times\mK$, without changing the map near
$\{0\}\times\mK$. The modified map then satisfies {\rm(H4)}, and the local
unstable fibers obtained for it coincide, after possibly shrinking the
neighborhood, with those of the original map.

\end{remark}
\begin{remark}
    In the case where $\mK$ is a uniformly stable attractor for the map $g_0$, the results already present in the literature \cite{alk2018,kiers2020,wieczorek2021} establish the existence of a local pullback attractor for the full system, which in that case coincides with the manifold $\mW_\mathrm{loc}^u(\mK_-)$.
    The usefulness of the present approach lies in the fact that hypotheses (H3) and (H4) are weaker, allowing for the existence of this manifold, which is sectionwise homeomorphic to $\mK$, even in situations where no local pullback attractor exists.
    Of particular relevance for our purposes is the case where $\mK$ is a torus belonging to a foliation of tori in an area-preserving conservative system (see examples in Section~\ref{sec:32examples_reversibilities}).
\end{remark}
\begin{remark}\label{rmk:2mK_+}
    An entirely analogous construction can be carried out at the opposite end $\tau=1$, replacing $g_0$ by the $d$-dimensional reduced map $g_1(x):=G(1,x)$ and assuming that $A(1)=1$ and that the $\tau$-direction is normally contracting there, i.e. $A'(1)<1$.
    In that case one obtains a local stable manifold attached to a $g_1$-invariant torus $\mK\subset\R^d$, contained in a strip $(1-\delta,1)\times\R^d$ and characterized by forward convergence to $\mK_+:=\{1\}\times\mK$. The translation of {\rm(H3)} is then obtained by replacing $\upalpha$-limits with $\upomega$-limits, namely one requires that the $\upomega$-limit set of every point of $\mN\setminus\mK$ have empty intersection with $\mK$. Likewise, {\rm(H4)} is replaced by the corresponding domination and bunching conditions.
\end{remark}
\begin{remark}\label{remark:asymptoticpairs}
The property  \eqref{eq:asymptoticpair}  states that
$(\tau,y)$ and $(0,x)$ are asymptotic pairs as time decreases under the action of $F$; that is, their orbits approach one another at very negative times and have the same dynamics.
Indeed, in order to obtain the characterization \eqref{eq:asymptoticpair} that is valid for a given $x\in\mK$ and any $\tau\in[0,\delta)$, one may replace the condition that $g_0$ is a rigid rotation on a torus by the condition that there are no points in $\mK$, other than $x$, that are asymptotic pairs with $x$ as time decreases under the action of $g_0$.
\end{remark}
\begin{remark}\label{rmk:2globalization}
    Assumption (H1) expresses the uniform hyperbolicity of the hyperplane $\{\tau=0\}$.
    This uniformity is essential for obtaining the constant $\delta>0$, independent of $x\in\mK$, such that $W^u(\tau,x)$ is well defined for all $(\tau,x)\in[0,\delta)\times\mK$.
    Next, the invariance equation \eqref{eq:Winvariancequation} implies that
    \begin{equation}\label{eq:2howtoextend}
    W^u(\tau,x)=G\big(A^{-1}(\tau),W^u(A^{-1}(\tau,g_0^{-1}(x)))\big)
    \end{equation}
    holds for all $(\tau,x)\in[0,\delta)\times\mK$.
    We may therefore use \eqref{eq:2howtoextend} as a definition wherever its right-hand side is previously defined.
    Proceeding in this way, we define 
    \[
    \tau_u:=\sup\big\{\tau\in[0,1]\mid\,W^u(\bar\tau,x)\text{ is defined for all }(\bar\tau,x)\in[0,\tau]\times\mK\big\}\,.
    \]
    as the supremum of the values of $\tau$ for which $W^u(\tau,x)$ is defined for every $x\in\mK$.
    Then, for any $\delta$  such that $0<\delta<\tau_u$, we can define the globalized unstable manifold 
\[
\mW^u(\mK_-):=\big\{F^n(\tau_0,W^u(\tau_0,x_0))\mid\, n\in\N, (\tau_0,x_0)\in(0,\delta)\times\mK\big\}\,.
\]
Note that, for any $(\tau,x)\in\mW^u(\mK_-)$, there exists $x_0\in\mK$ such that $x=W^u(\tau,x_0)$. 
\end{remark}

\section{Phase-Induced Tipping and Reversibility Constraints}\label{subsec:phaseinducedtipping}

This section discusses two approaches to phase-induced tipping in asymptotically autonomous systems that have been proposed in the literature and examines how reversibility can prevent this phenomenon.

\subsection{Two approaches to phase-induced tipping}
In this subsection, we present two different approaches to tipping induced by
phase-change that are found in the literature, although often formulated there
for ODEs, and explain them in a unified manner through the unstable foliation
Theorem~\ref{teor:2main_result} introduced in the previous section.
However, it is worth noting that some results in the literature on this type of tipping are developed in a more general framework than the one considered in this article, where the dynamical system cannot be necessarily expressed as asymptotically autonomous \cite{dno4,hoyerleitzel2021,longonunezobaya,lnor}

In any case, the points of the unstable manifold $\mW^u(\mK_-)$ (recall
Remark~\ref{rmk:2globalization}) are understood to represent the evolution of
certain states that originate from $\mK_-$ in the past.
In this way, the two approaches presented below concern the identification of
different possible $\upomega$-limit sets for distinct points of the form
\[
\big(\tau,\,W^u(\tau,x)\big)
\]
for $(\tau,x)\in\mW^u(\mK_-)$.
That is,  one has to study the possible  limits, as $n\to\infty$, of sequences (orbits) of the type
\[
\big(F^n(\tau,W^u(\tau,x))\big)_n \, .
\]

First, let us make precise which recurrence equation solves the second component of the previous sequence, which takes values in $\R^d$, as this information will prove useful in explaining the two different approaches to phase-induced tipping below.
Equation~\eqref{eq:Winvariancequation} implies that
$F^n(\tau,W^u(\tau,x))=(A^n(\tau),W^u(A^n(\tau),g_0^n(x))$,
and then 
\[
\begin{split}
F^{n+1}(\tau,W^u(\tau,x))&=F\big(F^n\big(\tau,W^u(\tau,x)\big)\big)\\
&=\Big(A^{n+1}(\tau),\,G\big(A^n(\tau),W^u\big(A^n(\tau),g_0^n(x)\big)\big)\Big)\,,
\end{split}
\]
Since $\tau\in(0,1)$,
it follows from equations \eqref{eq:torecall2definitionLambdaHat} and
\eqref{eq:torecall3definitionA} that
\[
\hat\Lambda(A^n(\tau))=\Lambda\big(\Theta^{-1}(A^n(\tau))\big)=\Lambda(n+\Theta^{-1}(\tau))\,,
\]
and thus \eqref{eq:torecall1definitionG} yields
\[
F^{n+1}(\tau,W^u(\tau,x))=\big(A^{n+1}(\tau),\,g\big(\Lambda(n+\Theta^{-1}(\tau)),W^u(A^n(\tau),g_0^n(x))\big)\big)\,.
\]
Therefore, using \eqref{eq:Winvariancequation} on the left-hand side we get
\[
W^u(A^{n+1}(\tau),g_0^{n+1}(x))=g\big(\Lambda(n+\Theta^{-1}(\tau)),W^u(A^n(\tau),g_0^n(x))\big)\,,
\]
that is, the sequence $\big(W^u(A^{n}(\tau),g_0^{n}(x))\big)_n$ is a solution of the recurrence
\begin{equation}\label{eq:3recurrenceagain}
x_{n+1}=g(\Lambda(n+c),x_n)\,,\quad n\in\Z\,,
\end{equation}
with $c=\Theta^{-1}(\tau)$.
Consequently, for any $\tau_1,\tau_2\in(0,1)$ for which there exists $m\in\Z$ such that $\tau_1=A^m(\tau_2)$, we conclude that the second component of 
\begin{equation}
    \label{eq:backtotherecurrence}
(F^n(\tau_1,W^u(\tau_1,x)))_n
\quad\text{and}\quad
(F^n(\tau_2,W^u(\tau_2,x)))_n
\end{equation}
satisfy the same recurrence equation, with the index shifted by $m$.
Otherwise, the second components of \eqref{eq:backtotherecurrence} do not satisfy the same recurrence equation \eqref{eq:3recurrenceagain} for any fixed constant value of $c \in \mathbb{R}$,
since subsequent iterates cannot be related by an integer shift of the index. 
To consider all the possible recurrence equations arising in \eqref{eq:backtotherecurrence}, it suffices to study initial values of $\tau$ varying in a fundamental domain $[\bar\tau,A(\bar\tau))$ for certain $\bar\tau\in(0,\delta)$ such that $A(\bar\tau)\in(0,\delta)$.
\medskip

In the following, we assume that the characterization~\eqref{eq:asymptoticpair} holds, that is,  that the points of the form $(\tau,W^u(\tau,x))$ on the manifold $\mW^u(\mK_-)$ are the only points within $[0,1]\times\R^d$ that are asymptotic pairs with $(0,x)$ as time decreases under the action of $F$ (recall Remark~\ref{remark:asymptoticpairs}).
Then, taking into account the previous information,
we summarize the approaches to tipping due to phase change in the following two:
\medskip

\textbf{First approach.}
For some fixed $\bar\tau\in(0,\tau_u)$ such that $A(\bar\tau)\in(0,\tau_u)$, we study the $\upomega$-limit sets of $\big(\tau,\,W^u(\tau,x)\big)$ for all $x\in\mK$, either by taking $\tau=\bar\tau$ fixed or by allowing $\tau$ to vary over a fundamental domain $[\bar\tau,A(\bar\tau))$.
Then, all the orbits under consideration are sequences of the form
\[
\big(F^n(\tau,W^u(\tau,x))\big)_n
\]
for $x$ varying in $\mK$ and either $\tau=\bar\tau$ or $\tau\in[\bar\tau,A(\bar\tau))$ depending on the recurrence equations under consideration.
Tipping occurs when there are multiple distinct $\upomega$-limit sets (that is, more
than one possible long-term state) for the system.

This approach can be interpreted as follows: orbits asymptotically approaching
certain regions of the past invariant set $\mK_-$ converge to $\upomega$-limit
sets different from those reached by orbits originating in other regions.
This corresponds to the phase-tipping viewpoint of \cite{alk2018} and
\cite{alk2021}, where tipping depends on the initial state on the past
attractor.
\medskip

\textbf{Second approach.}
For some fixed $\bar x\in\mK$, we study the $\upomega$-limit sets of
$\big(\tau,\,W^u(\tau,\bar x)\big)$ either by taking
$\tau\in\{A^n(\bar\tau)\mid\,n\in\Z\}\cap(0,\tau_u)$ or $\tau\in(0,\tau_u)$.
Again, the first option amounts to restricting ourselves to solutions of the
same recurrence equation \eqref{eq:3recurrenceagain}, up to an integer shift of
the index, whereas the second option amounts to considering solutions of
\eqref{eq:3recurrenceagain} for all possible real values of $c$, as explained
above.

Within this approach, studying tipping amounts
to fixing an element $x$ of the past invariant compact set $\mK_-$
and  analyzing all possible $\upomega$-limits of  points lying on the fiber of its
unstable foliation.  This approach arises naturally when the phase
is allowed to vary as an additional parameter  in the model under consideration,
entering additively over time into certain terms of the equation.
This approach is close in spirit to that of \cite{dno3,dno4}.
\medskip

Naturally, the suitability of one approach or the other will depend on the nature of the problem under study.
The next proposition establishes a strong link between these two approaches,
showing that the sets under study in both approaches are dense in one another.

\begin{prop}Assume that $(\mK,g_0)$ is minimal.
Let $\bar x\in\mK$ be fixed.
Then,
for any $\bar\tau\in(0,\tau_u)$,
\begin{equation}
    \mF_{\bar\tau,\bar x}:=\big\{ F^n(\tau,W^u(\tau,\bar x))\;\big|\;n\in\N\,,\;A^n(\tau)=\bar\tau\,,\;\tau\in(0,\min\{\delta,\bar\tau\})\big\}
\end{equation}
is dense in
\[
\mW_{\bar\tau}^u:=\big\{(\bar\tau,W^u(\bar\tau,x))\,\mid\,x\in\mK\big\}\,.
\]
Consequently,
\[
\big\{ F^n(\tau,W^u(\tau,\bar x))\;\big|\;n\in\N\,,\;A^n(\tau)\in[\bar\tau,A(\bar\tau))\,,\;\tau\in(0,\min\{\delta,\bar\tau\})\big\}
\]
is dense in
\[
\big\{(\tau,W^u(\tau,x))\,\mid\,\tau\in[\bar\tau,A(\bar\tau))\,,\;x\in\mK\big\}\,.
\]
\end{prop}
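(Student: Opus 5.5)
The key observation is that, by the invariance equation \eqref{eq:Winvariancequation} (and its globalized version \eqref{eq:2howtoextend} from Remark~\ref{rmk:2globalization}), whenever $A^n(\tau)=\bar\tau$ we have
\[
F^n\bigl(\tau,W^u(\tau,\bar x)\bigr)=\bigl(\bar\tau,\,W^u(\bar\tau,g_0^n(\bar x))\bigr)\,.
\]
So $\mF_{\bar\tau,\bar x}$ is the image, under the map $x\mapsto(\bar\tau,W^u(\bar\tau,x))$, of the set $\{g_0^n(\bar x)\mid n\in N_{\bar\tau}\}$. Here $N_{\bar\tau}$ is the set of $n\in\N$ for which $\tau:=A^{-n}(\bar\tau)$ lies in $(0,\min\{\delta,\bar\tau\})$.

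Since $A$ is an increasing diffeomorphism of $[0,1]$ fixing $0$ with $A'(0)>1$, and $A(\tau)>\tau$ on $(0,1)$, we have $A^{-n}(\bar\tau)\to0$. Hence $N_{\bar\tau}$ contains all sufficiently large $n$, say $n\ge n_0$. The plan is therefore:
\begin{enumerate}
\item Verify that $x\mapsto (\bar\tau,W^u(\bar\tau,x))$ is well defined and continuous on $\mK$ for $\bar\tau\in(0,\tau_u)$.
\item Show that $\{g_0^n(\bar x)\mid n\ge n_0\}$ is dense in $\mK$.
\item Conclude density, because the continuous image of a dense set is dense in the image of the whole space.
\end{enumerate}

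For step 1, I write $\bar\tau=A^{m}(\tau')$ with $\tau'\in(0,\delta)$. By \eqref{eq:2howtoextend}, $W^u(\bar\tau,\cdot)=\pi_x\circ F^m\circ(\tau',W^u(\tau',g_0^{-m}(\cdot)))$, which is a composition of continuous maps, using Theorem~\ref{teor:2main_result}(iv) for $W^u(\tau',\cdot)$. This also confirms the displayed identity for all $n$ in $N_{\bar\tau}$, not only when $\bar\tau<\delta$.

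For step 2, minimality of $(\mK,g_0)$ gives that every orbit is dense. I need density of the forward tail. In a compact minimal system, the $\upomega$-limit set of any point is nonempty, closed and invariant, hence equal to $\mK$. Thus every $y\in\mK$ is a limit of $g_0^{n_k}(\bar x)$ with $n_k\to\infty$, and in particular $\{g_0^n(\bar x)\mid n\ge n_0\}$ is dense. Care is needed here: if minimality is only meant for the $\Z$-action, I first note that for homeomorphisms of compact spaces, minimality forces forward orbits to be dense as well, via this same $\upomega$-limit argument. I expect this to be the only subtle point.

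For the consequence, I take $\tau^*\in[\bar\tau,A(\bar\tau))$ and $x\in\mK$, and apply the first part with $\tau^*$ in place of $\bar\tau$; this is allowed since $\tau^*<\tau_u$, possibly after assuming $A(\bar\tau)\le\tau_u$ as in the setup. This gives points of the form $F^n(\tau,W^u(\tau,\bar x))$ with $A^n(\tau)=\tau^*$ arbitrarily close to $(\tau^*,W^u(\tau^*,x))$. These points all belong to the second set, because $A^n(\tau)=\tau^*\in[\bar\tau,A(\bar\tau))$. Density of the union over fibers then follows pointwise.
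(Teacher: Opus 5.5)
Your proposal is correct and follows essentially the same route as the paper. You use the invariance identity to write $\mF_{\bar\tau,\bar x}$ as the image of the $g_0$-orbit of $\bar x$ under the continuous map $x\mapsto(\bar\tau,W^u(\bar\tau,x))$, get density from minimality, and take the union over $\tau\in[\bar\tau,A(\bar\tau))$ for the consequence; your extra care with the index set (density of forward tails), the continuity of the globalized $W^u$, and the implicit requirement $A(\bar\tau)\le\tau_u$ fills in details the paper's proof leaves tacit.
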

\begin{proof}
The invariance property~\eqref{eq:Winvariancequation} can be rewritten as
\[
F^n\bigl(\tau,W^u(\tau,x)\bigr)
=
\bigl(A^n(\tau),\,W^u(A^n(\tau),g_0^n(x))\bigr) \,,
\]
and therefore
\[
\mF_{\bar\tau,\bar x}=\big\{\big(\bar\tau,W^u(\bar\tau,g_0^n(\bar x))\big)\,\mid\; n\in\N\big\}\,.
\]
Since $(\mK,g_0)$ is minimal, every forward orbit is dense in $\mK$.
This fact and the continuity of $W(\tau,x)$ proves the first claim.
The second one follows from the first one, since $\bigcup_{\tau\in[\bar\tau,A(\bar\tau))}\mF_{\tau,\bar x}$ is dense in $\bigcup_{\tau\in[\bar\tau,A(\bar\tau))}\mW_\tau^u$.
\end{proof}

The preceding proposition shows that, within the asymptotically autonomous setting considered here,
the second approach provides a subset of the information offered by the first.
It is worth emphasizing, however, that if the asymptotic dynamics were fully
nonautonomous, it would not even be clear how to define the first approach,
whereas the second approach would still admit a natural definition and remains
applicable. Consequently, these two approaches are complementary tools, with the
relative usefulness determined by the structure of the asymptotic dynamics.

In Section~\ref{sec:42surgicalintersections}, we present an example in which the intersection between the unstable manifold of the object selected in the past and the stable manifold of the object selected in the future consists of a single orbit.
Equivalently, when restricted to a single fundamental domain, this intersection reduces to a single point.
Depending on the fibre chosen in the second approach, this point may not belong to the dense set $\mF_{\bar\tau,\bar x}$.
Consequently, the second approach would not detect any change in the behaviour of the $\upomega$-limit sets under consideration, whereas first approach would detect such a change.
This example highlights the importance of selecting an appropriate approach to tipping induced by change of phase depending on the specific problem under investigation.

%SECTION ON REVERSIBILITY
\subsection{Reversibility and time-dependent modulation}
In this section,
under the assumption that past and future systems are the same,
we present reversibility-based arguments
that preclude the existence of distinct $\upomega$-limit sets within $\{\tau=1\}$ for points on the unstable manifold $\mW^u(\mK_-)$, or at least ensure that some points on that manifold converge towards a known $\upomega$-limit set.

To carry out this study, Section~\ref{rev_f}
focuses  on a specific type of nonautonomous variation of the map \eqref{eq:compactified}, namely a time-dependent modulation of a $C^r$ reversible map $f:\R^d\to\R^d$.
More precisely, given a function $\hat\Lambda\colon[0,1]\to\R$ satisfying $\hat\Lambda(0)=\hat\Lambda(1)=1$, we study the dynamics induced by a map of the form
\begin{equation}\label{eq:2modulationmap}
\begin{split}
F\colon[0,1]\times\R^d&\to[0,1]\times\R^d\\
(\tau,x)&\mapsto(A(\tau),\,\hat\Lambda(\tau)f(x))\,.
\end{split}
\end{equation}
This class of maps provides a natural framework for modeling nonautonomous discrete dynamics in which the intrinsic evolution of the system is described by the map $f$, while the factor $\hat\Lambda(\tau)$ represents a time-dependent expansion or contraction.
Such a modulation may arise from external gain or loss, parametric forcing, environmental variability, seasonal effects, or control mechanisms acting on the system over time.
In this kind of systems, we address the question of whether the reversibility property of the map $f$ can be extended to a spatio-temporal reversibility property of the map $F$, and what implications this reversibility has for the $\upomega$-limit sets of points in $\mW^u(\mK_-)$.
On the contrary, in Section~\ref{rev_F} we do not assume such a specific type of compactified system. Instead, we discuss how imposing restrictive reversibility properties
on $G(\tau,x)$ implies the existence of heteroclinic connections between the past and the future systems, and whether we can obtain a smooth cylinder formed by connections in the discrete setting.
Finally, in Section~\ref{sec:32examples_reversibilities} we illustrate the properties derived from the reversibilities in both settings. 

\subsubsection{Reversible limit map $f$.} \label{rev_f}
Consider a compactified system of the form~\eqref{eq:2modulationmap}.
We recall that $f$ is said to be \emph{reversible} if $f$ is invertible and there exists a map $S\colon\R^d\to\R^d$, called a reversing symmetry or reversor of $f$, satisfying $S^2=\mathrm{Id}$, that is, $S$ is an involution--and $f=S\circ f^{-1}\circ S$.
Note that $f^{-1}\circ S$ is also a reversor of $f$, so every reversible map can be writen as the composition of two reversions.
Moreover, every $C^1$ reversible planar map can be conjugated through a $C^1$ difeomorphism to a map which has a linear reversor~\cite{cima2015}.

\begin{prop}\label{prop:2reversibility} Assume that
\begin{itemize}
\item[{\rm(i)}] $f$ is reversible and admits a linear reversor $L$ such that $f=L \circ S$,
\item[{\rm(ii)}] $A(\tau)+A^{-1}(1-\tau)=1$ for all $\tau\in[0,1]$,
\item[{\rm(iii)}] $\hat\Lambda(\tau)\,\hat\Lambda(1-A(\tau))=1$ for all $\tau\in[0,1]$.
\end{itemize}
Then, $F$ admits a reversor $R(\tau,x)=(1-\tau,S(x))$.
\end{prop}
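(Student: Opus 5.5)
The plan is to verify directly that $R$ is an involution and that $R\circ F\circ R\circ F=\mathrm{Id}$ on $[0,1]\times\R^d$. Since $F$ is invertible, the second identity says that $R\circ F\circ R$ is a left inverse of $F$, hence equals $F^{-1}$. That is exactly the statement $F=R\circ F^{-1}\circ R$.

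\textbf{Step 1: structural facts about $S$.} From $f=L\circ S$ and $L^2=\mathrm{Id}$ we get $S=L\circ f$. I will first check that $S$ is an involution, using that $L$ is a reversor, so $L\circ f\circ L=f^{-1}$:
\[
S\circ S=L\circ f\circ L\circ f=f^{-1}\circ f=\mathrm{Id}.
\]
Consequently $R(\tau,x)=(1-\tau,S(x))$ satisfies $R^2=\mathrm{Id}$. I also record two identities:
\[
f\circ S=L\circ S\circ S=L, \qquad L\circ f=S.
\]
They replace any need for $S$ to be linear; only $L$ is used linearly.

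\textbf{Step 2: the $\tau$-component.} The composition is
\[
R\circ F\circ R\circ F(\tau,x)=\Big(1-A\big(1-A(\tau)\big),\;S\Big(\hat\Lambda\big(1-A(\tau)\big)\,f\big(S(\hat\Lambda(\tau)f(x))\big)\Big)\Big).
\]
Hypothesis (ii) at the point $\tau$ reads $A^{-1}(1-\tau)=1-A(\tau)$. Applying $A$ gives $A(1-A(\tau))=1-\tau$, so the first component equals $\tau$.

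\textbf{Step 3: the $x$-component, the only delicate point.} The nonlinear map $f$ is applied to the scalar multiple $S(\hat\Lambda(\tau)f(x))$, and neither $f$ nor $S$ is assumed homogeneous. I resolve this with the identities from Step 1 and the linearity of $L$:
\[
f\big(S(w)\big)=L(w), \quad\text{so}\quad f\big(S(\hat\Lambda(\tau)f(x))\big)=\hat\Lambda(\tau)\,L f(x)=\hat\Lambda(\tau)\,S(x).
\]
Hypothesis (iii) then gives
\[
S\Big(\hat\Lambda\big(1-A(\tau)\big)\hat\Lambda(\tau)\,S(x)\Big)=S(S(x))=x.
\]
Hence $R\circ F\circ R\circ F=\mathrm{Id}$.

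Finally I will note that the argument is uniform in $\tau\in[0,1]$. The endpoints $\tau=0,1$ are covered because (ii) and (iii) are assumed on the whole closed interval.
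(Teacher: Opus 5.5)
Your argument is correct and is essentially the paper's direct verification, using the same ingredients: (ii) for the $\tau$-component, linearity of $L$ to pull the scalar $\hat\Lambda(\tau)$ through, and (iii) to cancel the scalars. The only difference is cosmetic. You check $R\circ F\circ R\circ F=\mathrm{Id}$ via $f\circ S=L$ and $L\circ f=S$, whereas the paper writes $F^{-1}$ explicitly and checks $R\circ F^{-1}\circ R=F$ via $S\circ f^{-1}=L$. Your version thereby avoids dividing by $\hat\Lambda$ and derives $S^2=\mathrm{Id}$ from $L$ being a reversor instead of taking it as given.
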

\begin{proof}
First, note that
\[
F^{-1}(\tau,x)=\left(A^{-1}(\tau),\;f^{-1}\left(\frac{x}{\hat\Lambda(A^{-1}(\tau))}\right)\right)\,.
\]
Then, direct computation shows that
\[
    R\circ F^{-1}\circ R\,(\tau,x)
    =\left(1-A^{-1}(1-\tau),\;S\circ f^{-1}\left(\frac{S(x)}{\hat\Lambda(A^{-1}(1-\tau))}\right)\right)\,.
\]
So, since (ii) shows that $A(\tau)=1-A^{-1}(1-\tau)$, it suffices to check that
\[
\hat\Lambda(\tau)f(x)=S\circ f^{-1}\left(\frac{S(x)}{\hat\Lambda(A^{-1}(1-\tau))}\right)\,.
\]
Since $S\circ f^{-1}=L$ is linear,
\[
S\circ f^{-1}\left(\frac{S(x)}{\hat\Lambda(A^{-1}(1-\tau))}\right)=\frac1{\hat\Lambda(A^{-1}(1-\tau))}\,S\circ f^{-1}\circ S(x)=\frac{f(x)}{\hat\Lambda(A^{-1}(1-\tau))}\,,
\]
and assumptions (ii) and (iii) ensure that $f(x)/\hat\Lambda(A^{-1}(1-\tau))=\hat\Lambda(\tau)f(x)$.
\end{proof}
\begin{remark}\label{remark:3conditions(i)and(ii)involutive}
    1. The condition (ii) holds whenever the diffeomorphism $\Theta$ satisfies the symmetry $1=\Theta(t)+\Theta(-t)$ for all $t\in\R$, that is, the map $t\mapsto 1/2-\Theta(t)$ is odd.
    This is, in particular, the case of \eqref{eq:2Thetadefinition}.
   \smallskip
   
    2. We briefly discuss here a simple way to find functions $\hat\Lambda$ satisfying (iii), once (ii) is assumed.
    First, note that (ii) ensures that $B(\tau)=1-A(\tau)$ is an involution since
    \[
    B^2(\tau)=1-A(1-A(\tau))=1-A(A^{-1}(1-\tau))=\tau\,,
    \]
    and satisfies $B(0)=1$ and $B(1)=0$ because of \eqref{eq:torecall3definitionA}.
    Now, choose an antisymmetric function $Q(\tau_1,\tau_2)$, that is, a function such that $Q(\tau_1,\tau_2)=-Q(\tau_2,\tau_1)$, and define
    \begin{equation}\label{eq:3hatLambda_definition}
    \hat\Lambda(\tau)=\exp Q(\tau,B(\tau))\,.
    \end{equation}
    Then, the involutive character of $B$ and the antisymmetric character of $Q$ yield
    \[
    \hat\Lambda(\tau)\hat\Lambda(1-A(\tau))=\hat\Lambda(\tau)\hat\Lambda(B(\tau))=\exp\big(Q(\tau,B(\tau))+Q(B(\tau),B^2(\tau))\big)=1\,.
    \]
     We also have to assume that $Q(0,1)=0$ to guarantee that $\hat\Lambda(0)=\hat\Lambda(1)=1$.
    \end{remark}

Note that hypothesis (i) of Proposition \ref{prop:2reversibility} is present in some frequently used models, for example:
\begin{itemize}
    \item[{\rm(1)}]  the generalized standard maps
    \[
    f(x,y)=(x+y+F(x),\,y+F(x))\,,
    \]
    where $F\colon\R^d\to\R^d$,  can be written as  $f=L\circ S$ with
    $L(x,y)=(y-x,y)$  and $S(x,y)=(-x,y+F(x))$,  where $S$ is a reversor if $F$ is odd.
    \item[{\rm(2)}]  area-preserving generalized Hénon maps,
    \[
    f(x,y)=(y,\,-x+F(y))\,.
    \]
    In this case, one can take $ L(x,y)=(y,x)\,,$ and write $f=L\circ S$, where $S(x,y)=(-x+F(y),y)$.
\end{itemize}
The same conclusion holds for any map which is linearly topologically conjugate to one of the maps in {\rm(1)} or {\rm(2)}.

\medskip

The following result explains the relevance of the reversibility constructed in Proposition \ref{prop:2reversibility} through the existence of heteroclinic orbits, which will generically be part of one-dimensional families of heteroclinic orbits connecting each invariant torus of the past system with the same invariant torus of the future system.
 We note that condition $\hat\Lambda(0)=\hat\Lambda(1)=1$ guarantees $F(0,x)=F(1,x)$ for all $x\in\R^d$, and hence the invariant objects of the past and future systems can be identified without ambiguity.
\begin{coro}
\label{cor:3heteroclinicorbits}
Assume that the hypotheses of Proposition \ref{prop:2reversibility} hold.
Then,
\begin{enumerate}
    \item[{\rm (i)}] Given any subset $\mC\subseteq\{1/2\}\times\mathrm{fix}(S)$, the $F$-invariant set
    \[
    \bigcup_{n\in\Z}F^n(\mC)
    \]
    is invariant under the reversor $R$ given in Proposition \ref{prop:2reversibility}.
    \item[{\rm (ii)}] For any $(\tau,x)\in\{1/2\}\times\mathrm{fix}(S)$, the $\upalpha$-limit set and $\upomega$-limit set of $(\tau,x)$ for $F$ are of the form $\{0\}\times\mB$ and $R(\{0\}\times\mB)=\{1\}\times S(\mB)$ for some $f$-invariant compact set $\mB\subset\R^d$, respectively.
    If, in addition, $\mB$ is minimal under $f$ and $\mB\cap\mathrm{fix}(S)\neq\emptyset$, $\mB=S(\mB)$, so the $\upomega$-limit set of $(\tau,x)$ is $\{1\}\times\mB$.
\end{enumerate}    
\end{coro}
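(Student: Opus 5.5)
The proof rests on the reversor identity $R\circ F\circ R=F^{-1}$ provided by Proposition~\ref{prop:2reversibility}, together with two facts: $R$ is an involution, and $R$ fixes the slice $\{1/2\}\times\mathrm{fix}(S)$. From $F=R\circ F^{-1}\circ R$ and $R^2=\mathrm{Id}$ one gets, by induction, $R\circ F^n=F^{-n}\circ R$ for all $n\in\Z$.

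For (i), take $z\in\mC$. Then $R(z)=z$, because $\tau=1/2$ is sent to $1-1/2=1/2$ and $x\in\mathrm{fix}(S)$. Hence $R(F^n(z))=F^{-n}(R(z))=F^{-n}(z)$, so $R$ maps $\bigcup_n F^n(\mC)$ into itself. Applying $R$ once more gives equality. Invariance under $F$ is immediate.

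For (ii), fix $z=(1/2,x)$ with $x\in\mathrm{fix}(S)$.

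\emph{Limit sets lie in the boundary hyperplanes.} Since $A$ is an increasing diffeomorphism of $[0,1]$ whose only fixed points are $0$ and $1$, we have $A^n(1/2)\to1$ and $A^{-n}(1/2)\to0$. So every limit point of the orbit lies in $\{0\}\times\R^d$ (backward) or $\{1\}\times\R^d$ (forward).

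\emph{Relating $\upalpha$ and $\upomega$ via $R$.} The identity $R(F^{-n}(z))=F^{n}(R(z))=F^n(z)$ shows that the forward orbit is the $R$-image of the backward orbit. Since $R$ is a homeomorphism, this gives $\upomega(z)=R(\upalpha(z))$. Write $\upalpha(z)=\{0\}\times\mB$; then $\upomega(z)=\{1\}\times S(\mB)$.

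\emph{Compactness and invariance of $\mB$.} Here we need the orbit to be bounded, so that $\upalpha(z)$ is nonempty and compact. This is the point where I expect to have to add something. Boundedness is not automatic, so I would either read it into the statement or argue that the limit sets are taken in the sense that presupposes precompactness. I would state this explicitly as the main gap: if the backward orbit is bounded, then the forward orbit is bounded too, since it is its $R$-image and $S$ is continuous. Granting this, $\upalpha(z)$ is compact and $F$-invariant. On $\{0\}\times\R^d$ the map $F$ acts as $x\mapsto\hat\Lambda(0)f(x)=f(x)$, so $\mB$ is $f$-invariant.

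\emph{The minimal case.} Suppose $\mB$ is $f$-minimal and contains a point $p\in\mathrm{fix}(S)$. From $f=S\circ f^{-1}\circ S$ we get $S\circ f^n=f^{-n}\circ S$, so $S(\mB)$ is compact and $f$-invariant. By minimality, $\mB$ is the closure of the forward orbit of $p$, and also of its backward orbit: the backward limit set of $p$ is a nonempty closed invariant subset of $\mB$, hence equal to $\mB$. Then
\[
S(\mB)=\overline{\{S f^n(p)\}}=\overline{\{f^{-n}(S p)\}}=\overline{\{f^{-n}(p)\}}=\mB .
\]
Therefore $\upomega(z)=\{1\}\times\mB$.
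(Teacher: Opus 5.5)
Your proof is correct and follows essentially the paper's route. The identity $R\circ F^n=F^{-n}\circ R$, together with $R$ fixing $\{1/2\}\times\mathrm{fix}(S)$, gives (i) and $\upomega(z)=R(\upalpha(z))$, and the minimal case follows from $S\circ f^n=f^{-n}\circ S$ and the density of the orbit of a point of $\mB\cap\mathrm{fix}(S)$; the paper phrases this last step as $\mB\subseteq S(\mB)$ plus the involution property, which is equivalent to your direct computation. The boundedness caveat you flag is legitimate, and the paper's proof leaves it equally implicit by saying only that the first part of (ii) ``follows easily'' (compare the ``if they exist as bounded sets'' qualifier in Proposition~\ref{prop:3supersymmetry}).
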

\begin{proof}
(i)
First, note that $\mC=\{1/2\}\times\mD$ for some $\mD\subseteq\mathrm{fix}(S)$.
Therefore,
\[
R(\mC)=\{1/2\}\times S(\mD)=\{1/2\}\times\mD=\mC\,,
\]
that is, $\mC$ is invariant under $R$.
Then,
\[
R(F^n(\mC))=F^{-n}(R(\mC))=F^{-n}(\mC)\,,
\]
from where we conclude that
\[
R\left(\bigcup_{n\in\Z}F^n(\mC)\right)=\bigcup_{n\in\Z}R\big(F^n(\mC)\big)=\bigcup_{n\in\Z}F^{-n}(\mC)=\bigcup_{n\in\Z}F^n(\mC)\,,
\]
so the set $\cup_{n\in\Z}F^n(\mC)$ is invariant under $R$, as we wanted to see.
\medskip

(ii) The first assertion follows easily from $R\circ F^n(1/2,x)=F^{-n}\circ R(1/2,x)=F^{-n}(1/2,x)$.
Assume now that $\mB$ is minimal under $f$ and that $\mB\cap\mathrm{fix}(S)\neq\emptyset$ and note first that $f(S(\mB))=S(f^{-1}(\mB))=S(\mB)$, so $S(\mB)$ is invariant under $f$ and it is closed since $S$ is an involution.
Take $y\in \mB\cap\mathrm{fix}(S)$ and note that $y\in\mB\cap S(\mB)$.
Since $\mB$ is minimal, it is compact and the orbit of $y$ is dense in $\mB$.
Thus, $\mB\subseteq S(\mB)$, and since $S$ is an involution, it follows that $\mB=S(\mB)$, as we wanted to see.
\end{proof}
\begin{remark}
    Let $\mK\subset\R^d$ be an $m$-dimensional embedded invariant torus for $f$ with internal dynamics conjugated to an irrational rotation and such that $\mK\cap\mathrm{fix}(S)\neq0$.
    Consider the manifold $\mW^u(\mK_-)$, assume $1/2<\tau_u$ (recall Remark \ref{rmk:2globalization}), define
    \[
    \Sigma_{1/2}:=\big\{W^u(1/2,x)\mid\,x\in\mK\big\}\subset\R^d
    \]
    and assume that $\Sigma_{1/2}\cap\mathrm{fix}(S)\neq\emptyset$.
    
    Note that Theorem \ref{teor:2main_result}(iii) ensures that, for any $x\in\Sigma_{1/2}$, the $\upalpha$-limit set of $(1/2,x)$ is contained in $\mK_-$.
    Hence, by the minimality of $\mK$ under $f$, this $\upalpha$-limit set must in fact coincide with $\mK_-$.
    Therefore, for each $x\in\Sigma_{1/2}\cap \mathrm{fix}(S)$, Corollary \ref{cor:3heteroclinicorbits}(ii) ensures that the $\upomega$-limit set of $(1/2,x)$ is $\mK_+:=\{1\}\times\mK$.
    
    That is, each point in $\Sigma_{1/2}\cap \mathrm{fix}(S)$ provides an heteroclinic connection between the tori $\mK_-=\{0\}\times\mK$ and $\mK_+=\{1\}\times\mK$.
    Assume that $\mathrm{fix}(S)$ is an embedded submanifold of $\mathbb R^d$ of dimension $k$.
    One has $\dim\Sigma_{1/2}=\dim\mK=m$.
    Then, if $\mathrm{fix}(S)$ intersects $\Sigma_{1/2}$ transversely in $\mathbb R^d$, their intersection is a submanifold of dimension $\dim\bigl(\Sigma_{1/2}\cap\mathrm{fix}(S)\bigr)=m+k-d$.
    Each point of this manifold belongs, by the previous argument, to $\mW^u(\mK_-)\cap \mW^s(\mK_+)$.
    If, in addition, the intersection between these two invariant manifolds is transverse at every point of $\Sigma_{1/2}\cap\mathrm{fix}(S)$, then this gives rise to a continuum of heteroclinic trajectories between $\mK_-$ and $\mK_+$ which, after including the $\tau$-direction, has dimension $m+k-d+1$ on $[0,1]\times\R^d$.
\end{remark}

\subsubsection{Reversibilities on the compactified system leading to a continuum of heteroclinic orbits} \label{rev_F}
We finally state a much stronger condition on the general map \eqref{eq:compactified}, rather than on the particular setting considered in the previous subsection, guaranteeing that every orbit of $\mW^u(\mK_-)\cap\{\tau=1/2\}$ is an heteroclinic connection between $\{0\}\times\mK$ and $\{1\}\times\mK$.
In \cite{scarcella2024-2}, a related continuous time condition is given for asymptotically autonomous hamiltonian flows, hence providing such kind of connections, that in the continuous setting give rise to a KAM-type invariant cylinder.
In the present discrete formulation, however, this cylinder-like structure may no longer persist as a consequence of the splitting of the invariant manifolds,
as we will illustrate in the second example of Section \ref{sec:32examples_reversibilities}.

\begin{prop}\label{prop:3supersymmetry}
Assume that
\begin{enumerate}
    \item[{\rm(i)}] $x\mapsto G(\tau,x)$ is invertible for all $\tau\in[0,1]$,
    \item[{\rm(ii)}] $A(\tau)+A^{-1}(1-\tau)=1$ for all $\tau\in[0,1]$,
    \item[{\rm(iii)}]  there exists a closed $F$-invariant set $\mC\subseteq[0,1]\times\R^d$ such that $\mC\cap\big(\{1/2\}\times\R^d\big)\neq\emptyset$ and $x=G(1-A(\tau),G(\tau,x))$ for all $(\tau,x)\in\mC$.
\end{enumerate}
Then, $R(\tau,x)=(1-\tau,x)$ is a reversor for $F|_\mC$, and for any given point
$(1/2,x)\in\mC$ its $\upalpha$-limit and $\upomega$-limit sets (if they exist as bounded sets)
are subsets of $\mC$ of the form $\{0\}\times\mB_x$ and $\{1\}\times\mB_x $ for some set $\mB_x\subseteq\R^d$.
\end{prop}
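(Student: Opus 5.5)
The plan is to verify the reversing identity $R\circ F=F^{-1}\circ R$ directly on $\mC$ by a computation, much as in the proof of Proposition~\ref{prop:2reversibility}. I then iterate it along orbits and read off the limit sets of the symmetric points $(1/2,x)$, since these are fixed by $R$.

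\textbf{Step 1: the reversing identity.} By (i), $F$ is invertible and
\[
F^{-1}(\tau,x)=\bigl(A^{-1}(\tau),\,G(A^{-1}(\tau),\cdot)^{-1}(x)\bigr).
\]
For $(\tau,x)\in\mC$ I compute both sides:
\[
R\circ F(\tau,x)=\bigl(1-A(\tau),\,G(\tau,x)\bigr),
\qquad
F^{-1}\circ R(\tau,x)=\bigl(A^{-1}(1-\tau),\,G(A^{-1}(1-\tau),\cdot)^{-1}(x)\bigr).
\]
The first components agree by (ii), since $A^{-1}(1-\tau)=1-A(\tau)$. Using this again, the second components agree exactly when $G\bigl(1-A(\tau),G(\tau,x)\bigr)=x$, and this is hypothesis (iii). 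Hence $R\circ F=F^{-1}\circ R$ holds pointwise on $\mC$, and this is the sense in which $R$ is a reversor for $F|_\mC$. Note that $R$ is an involution.

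\textbf{Step 2: iteration.} Next I show $R\circ F^n=F^{-n}\circ R$ on $\mC$ for all $n\in\N$, by induction. The point to watch, and the only real subtlety, is that $R(\mC)$ need not be contained in $\mC$, so the identity may only be applied at points known to lie in $\mC$. I would iterate in the order
\[
R F^{n+1}(p)=R F\bigl(F^n p\bigr)=F^{-1}R F^n(p)=F^{-1}F^{-n}R(p).
\]
This uses only $F^n p\in\mC$, which follows from the $F$-invariance of $\mC$, together with the induction hypothesis at $p$. For $p=(1/2,x)\in\mC$ we have $R(p)=p$, so $R(F^n p)=F^{-n}p$ for all $n\ge0$.

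\textbf{Step 3: limit sets.} The $\tau$-component of $F^{\pm n}(p)$ is $A^{\pm n}(1/2)$. By (H1) and the analogous condition at $\tau=1$ (monotonicity of $A$ with fixed points $0$ and $1$ only), $A^{n}(1/2)\to1$ and $A^{-n}(1/2)\to0$. Hence any $\upomega$-limit point lies in $\{1\}\times\R^d$ and any $\upalpha$-limit point lies in $\{0\}\times\R^d$. Since $R$ is continuous, the relation $R(F^n p)=F^{-n}p$ shows that $R$ maps the $\upomega$-limit set bijectively onto the $\upalpha$-limit set, assuming these exist as bounded sets. As $R$ fixes the $x$-component and sends $\tau\mapsto1-\tau$, writing the $\upalpha$-limit set as $\{0\}\times\mB_x$ gives the $\upomega$-limit set as $\{1\}\times\mB_x$. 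Finally, both sets are contained in $\mC$ because $\mC$ is closed and contains the whole orbit of $p$.
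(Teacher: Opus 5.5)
Your proof is correct and follows the paper's route: you verify the reversing relation on $\mC$ directly from (ii) and (iii), then iterate it at the $R$-fixed points $(1/2,x)$, as in the proof of Corollary~\ref{cor:3heteroclinicorbits}. The differences are cosmetic. You check $R\circ F=F^{-1}\circ R$, so (iii) is applied exactly at the given point of $\mC$, whereas the paper checks the equivalent form $R\circ F^{-1}=F\circ R$. You also spell out details the paper leaves implicit: the convergence $A^{n}(1/2)\to1$ and $A^{-n}(1/2)\to0$, the ordering of the induction needed because $R(\mC)$ need not lie in $\mC$, and the use of closedness and invariance of $\mC$ to place the limit sets inside $\mC$.
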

\begin{proof} Let $G^{-1}\colon[0,1]\times\R^d\to\R^d$ be such that $G^{-1}(\tau,G(\tau,x))=\mathrm{Id}$.
Note that
\[
R\circ F^{-1}(\tau,x)=(1-A^{-1}(\tau),G^{-1}(A^{-1}(\tau),x))=(A(1-\tau),G(1-\tau,x))=F\circ R(\tau,x)\,,
\]
for all $(\tau,x)\in\mC$,
where $1-A^{-1}(\tau)=A^{-1}(\tau)$ follows from (ii) and $G^{-1}(A^{-1}(\tau),x)=G(1-\tau,x)$ follows from (iii).
    The assertion about the $\upalpha$-limit and $\upomega$-limit sets can be deduced from arguments analogous to those in the proof of Corollary \ref{cor:3heteroclinicorbits}.
\end{proof}

\subsubsection{Examples exhibiting reversibilities}\label{sec:32examples_reversibilities}
We illustrate the results of this section through the Hénon map \cite{lamb2026,vieirophd}.
To this end, we consider the maps
\[
T_a\left(\begin{matrix} x\\y\end{matrix}\right)=\left(\begin{matrix}x\\y-ax^2\end{matrix}\right)\,,\qquad
R_\beta\left(\begin{matrix}x\\ y\end{matrix}\right)=\left(\begin{matrix} \cos 2\pi\beta & -\sin 2\pi\beta\\ \sin 2\pi\beta & \cos 2\pi\beta \end{matrix}\right)\left(\begin{matrix}x\\ y\end{matrix}\right)\,,
\]
and the Hénon map $H_{\beta}=R_\beta\circ T_1$.
We refer to $a$ as the shear parameter and to $\beta$ as the rotation parameter.
A straightforward calculation yields
 $\|DH_\beta(x,y)\|_2=\sqrt{1+x^2}+|x|$, which will be used to verify 
 the gap condition in hypothesis (H4) of Theorem~\ref{teor:2main_result}.
\medskip

\textbf{First example.}
We consider the map
\begin{equation}\label{eq:3_modelfirstexample}
F(\tau,x,y)=\big(A(\tau),\,\hat\Lambda(\tau)\,H_\beta(x,y)\big)\,,
\end{equation}
for a fixed value $\beta=0.212$ of the rotation parameter, where 
the  map $A$ is given by \eqref{eq:2Adefinition} and
we define $\hat\Lambda$ through a simple antisymmetric function $Q(\tau_1,\tau_2)=20(\tau_1-\tau_2)\,\tau_1^2\,\tau_2^2$ as in \eqref{eq:3hatLambda_definition}.
Remark~\ref{remark:3conditions(i)and(ii)involutive}.2 guarantees that condition (iii) of Proposition \ref{prop:2reversibility} hold.
The map $H_\beta$ is reversible with $H_\beta=L_\beta\circ S$ and
\[
L_\beta\left(\begin{matrix} x \\y \end{matrix}\right)=\left(\begin{matrix} \cos(2\pi\beta) & \sin(2\pi\beta) \\ \sin(2\pi\beta) & -\cos(2\pi\beta) \end{matrix}\right)\left(\begin{matrix}x\\ y\end{matrix}\right)\,,\qquad
S\left(\begin{matrix} x\\ y\end{matrix}\right)=\left(\begin{matrix} x \\ x^2-y\end{matrix}\right)\,.
\]
 Let $(r,\varphi)$ denote polar coordinates around the origin, which 
 is an elliptic fixed point of $H_\beta$. 
 Since the last is (locally) a twist map, the invariant curves surrounding the origin are graphs over the angular variable $\varphi$ and, by fixing a rotation number, one identifies an invariant curve $\Gamma$ in the phase plane of $H_\beta$ (locally around the origin). Then, we can numerically approximate the curve $\Gamma$ by applying Newton’s method to the invariance equation, thereby obtaining a Fourier representation of the invariant curve in terms of the angular variable $\varphi$, see \cite{jorba2001}  (this is referred to as the large-matrix method in \cite{mamotreto}).
Note that by considering $H_\beta^k$, for $k \geq 2$, we can obtain a $k$-periodic invariant curve $\Gamma$ of $H_\beta$.
Since $\hat\Lambda'(0)=0$, after some calculations it follows that
\[
DF(F^{-1}(0,x,y))=\left(
\begin{array}{ccc}
e^\alpha & 0 & 0\\
0 & \cos(2\pi\beta)+2\sin(2\pi\beta)\xi & -\sin(2\pi\beta)\\
0 & \sin(2\pi\beta)-2\cos(2\pi\beta)\xi & \cos(2\pi\beta)\\
\end{array}
\right)\,,
\]
where $\xi=x\cos(2\pi\beta)+y\sin(2\pi\beta)$.
It can be checked that $\|DF^{-1}(0,x,y)\|_2=\max\{e^{-\alpha},\sqrt{1+\xi^2}+|\xi|\}$.
If one considers $\alpha=1.8$, as in the examples below,
condition (H4) of Theorem~\ref{teor:2main_result}
with $\ell=1$ can be guarantee to hold for all
$\mK\subseteq\{(x,y)\mid x^2+y^2\leq1\}$.
Although the above estimate provides a conservative bound, the corresponding quantity can be evaluated much more accurately by numerical means.

Note that, if $\Gamma$ is minimal, Corollary~\ref{cor:3heteroclinicorbits}(ii) guarantees that $S(\Gamma)=\Gamma$.
Therefore, defining $\Gamma_-:=\{0\}\times\Gamma$ and $\Gamma_+:=\{1\}\times\Gamma$, we obtain $\Gamma_+=R(\Gamma_-)$, where $R(\tau,x)=(1-\tau,S(x))$.
In addition, Corollary \ref{cor:3heteroclinicorbits}(ii) ensures the existence of heteroclinic connections between $\Gamma_-$ and $\Gamma_+$ corresponding to the points $(\tau,x)\in\{1/2\}\times\mathrm{fix}(S)$ which belong to $\mW^u(\Gamma_-)$, where $\mathrm{fix}(S)=\{(x,y)\mid\,y=x^2/2\}$.
The points of intersection between $\mW^u(\Gamma_-)$, $\mW^s(\Gamma_+)$ and $\{\tau=1/2\}$ correspond exactly to all points in $[0,1]\times\R^d$ whose first component is $1/2$ and that belong to heteroclinic connections between $\Gamma_-$ and $\Gamma_+$, since these are precisely the trajectories satisfying the limit condition in Theorem~\ref{teor:2main_result}(iii) and its symmetric counterpart
\[
\bigl\{
z\in(0,\delta)\times\mathbb R^d
\;\big|\;
\lim_{n\to\infty}\dist\bigl(F^{n}(z),\,\Gamma_+\bigr)=0
\bigr\}\,.
\]

The central panel of Figure~\ref{fig:ThreeFigures} depicts the intersections of
$\mW^u(\Gamma_-)$ and $\mW^s(\Gamma_+)$ with the plane $\{\tau=1/2\}$, together
with the curve $\{1/2\}\times\mathrm{fix}(S)$, for three different choices of
$\Gamma$: an invariant circle of rotation number $\w=0.195$, an invariant circle
of rotation number $\w=0.205$ and a set of $5$-periodic invariant circles
contained between the two previous curves.
The side panels depict the manifolds $\mW^u(\mK_-)$ and $\mW^s(\mK_+)$ for two
of the choices of $\Gamma$.
Finally, Corollary~\ref{cor:3heteroclinicorbits}(ii) implies that every point of
$\mW^u(\Gamma_-)$ lying in $\{1/2\}\times\mathrm{fix}(S)$ also belongs to
$\mW^s(\Gamma_+)$. Equivalently, 
\[
\mW^u(\Gamma_-)\,\cap\big(\{1/2\}\times\mathrm{fix}(S)\big)\subseteq
\mW^u(\Gamma_-)\,\cap\,\mW^s(\Gamma_+)\,\cap\,\big(\{1/2\}\times\R^d\big)\,,
\]
This inclusion need not be an equality. Indeed, the central panel of Figure~\ref{fig:ThreeFigures} shows that it is strict in the present case.
\medskip
\begin{figure}
\includegraphics[width=\textwidth]{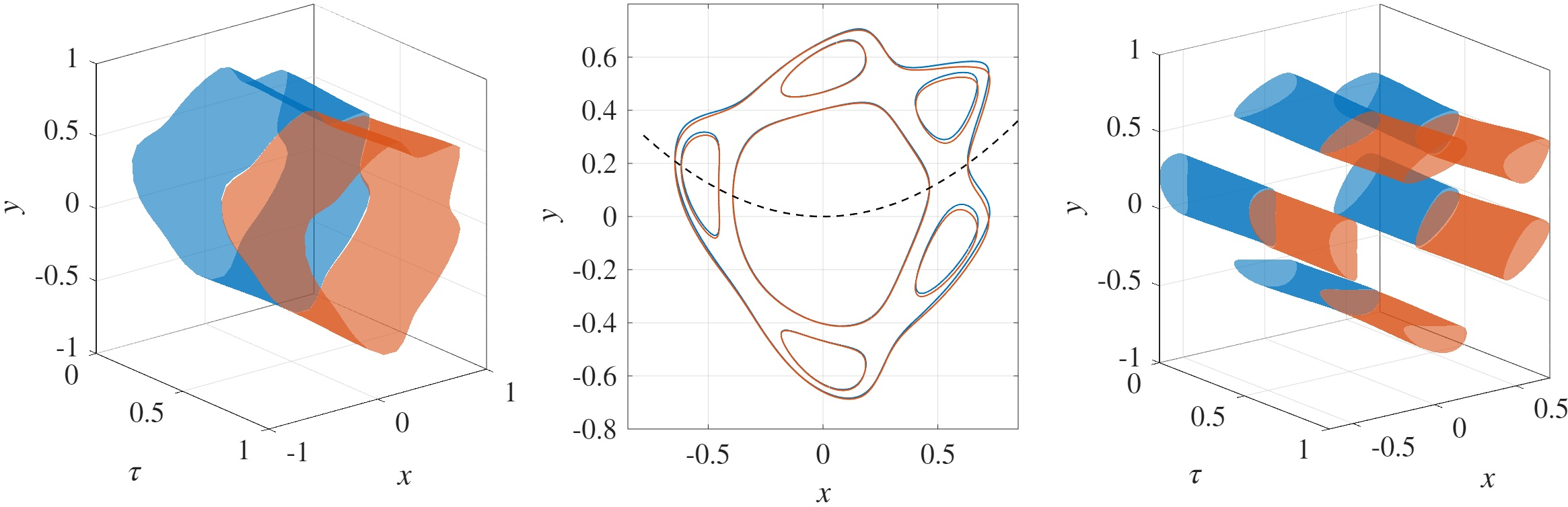}
\caption{
Stable (red) and unstable (blue) invariant manifolds associated with invariant curves of the time-dependently modulated Hénon map \eqref{eq:3_modelfirstexample}.
The left panel shows $\mW^u(\{0\}\times\Gamma_{0.195})$ 
issued from the invariant circle with rotation number $\w=0.195$, and depicted on $[0,1/2]$, together with its stable counterpart $\mW^s(\{1\}\times\Gamma_{0.195})$ on $[1/2,1]$.
The right panel shows $\mW^u(\{0\}\times\Gamma_{0.065})$ and $\mW^s(\{1\}\times\Gamma_{0.065})$
associated with an invariant curve  $\Gamma_{0.065}$  for $H_\beta^5$
rotating about a period-five elliptic point with rotation number $\w=0.065$.
The middle panel shows the intersection
with $\{\tau=1/2\}$ of the invariant manifolds displayed in the side panels
together with the intersection of the invariant manifolds of an invariant circle
rotating about the origin with rotation number
$\w=0.205$ and the reference parabola $\mathrm{fix}(S)=\{(x,y)\mid\,y=x^2/2\}$.
}
\label{fig:ThreeFigures}
\end{figure}

\textbf{Second example.}
We consider the map
\begin{equation}\label{eq:3Henonatres}
F(\tau,x,y)=\big(A(\tau),\, T_{-q(1-A(\tau))}\circ R_{\beta(q(\tau)-q(1-A(\tau))}\circ T_{q(\tau)}(x,y)\big)\,,
\end{equation}
where $A$ is of the form~\eqref{eq:2Adefinition}, $\beta=0.212$, and 
$q(\tau)=2\tau^3-3\tau^2+1$ is a cubic polynomial such that $q(0)=1$, $q(1)=0$ and $q'(0)=q'(1)=0$.
First, note that $T_a^{-1}=T_{-a}$, $R_\beta^{-1}=R_{-\beta}$ and $T_0=\mathrm{Id}$.
Then,
\[
F(0,x,y)=\big(0,\, T_0\circ R_\beta\circ T_1(x,y)\big)=\big(0,\, R_\beta\circ T_1(x,y)\big)=\big(0,H_\beta(x,y)\big)\,,
\]
and analogously $F(1,x,y)=(1,H_{\beta}^{-1}(x,y))$.
We now check that this model satisfies the hypotheses of Proposition~\ref{prop:3supersymmetry} with $\mC=[0,1]\times\R^d$.
Since it is easy to check that $F$ is invertible, it only rests to check that  
(iii) holds.
We denote $B(\tau)=1-A(\tau)$ so 
$G(\tau,x,y)=T_{-q(B(\tau))}\circ R_{\beta(q(\tau)-q(B(\tau)))}\circ T_{q(\tau)}(x,y)$ 
recall that $B^2=\mathrm{Id}$, and therefore rewrite the map $G\big(B(\tau),G(\tau,\cdot)\big)$ as
\[
T_{-q(\tau)}\circ R_{\beta(q(B(\tau))-q(\tau))}\circ T_{q(B(\tau))}\circ T_{-q(B(\tau))}\circ R_{\beta(q(\tau)-q(B(\tau)))}\circ T_{q(\tau)}\,.
\]
The  relations $T_a^{-1}=T_{-a}$ and $R_\beta^{-1}=R_{-\beta}$ ensure that
$G\big(B(\tau),G(\tau,\cdot)\big)=\mathrm{Id}$, as we wanted to see.

\begin{figure}
\includegraphics[width=\textwidth]{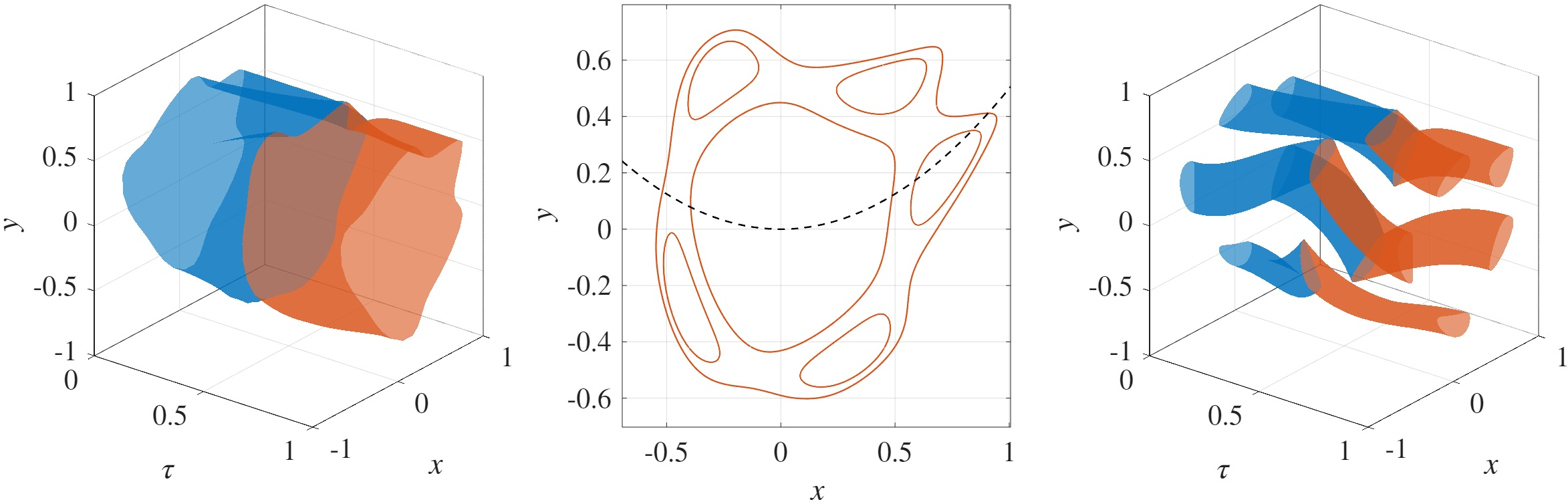}
\caption{
Stable and unstable invariant manifolds associated with invariant curves of the Hénon-related time-dependent map~\eqref{eq:3Henonatres}.
The selected curves of the past system for the representations are the same chosen in Figure~\ref{fig:ThreeFigures}.
In the side panels, the unstable manifolds are depicted over the $\tau$-interval $[0,0.5]$ whereas the stable manifolds are depicted over $[0.5,1]$.
The middle panel shows the perfect coincidence of the unstable and stable manifolds at $\tau=1/2$ of the desired objects.
}
\label{fig:ThreeFiguresV2}
\end{figure}

 As in the previous example, since $\hat\Lambda'(0)=0$, after some calculations it can be checked that $\|DF^{-1}(0,x,y)\|_2=\max\{e^{-\alpha},\sqrt{1+(x\cos\alpha+y\sin\alpha)^2}+|x\cos\alpha+y\sin\alpha|\}$.
 Moreover, we take
$\alpha=1.8$ in the map $A$ given by~\eqref{eq:2Adefinition},
 and the condition (H4) of Theorem~\ref{teor:2main_result} holds for the selected curves with $\ell=1$.

The central panel of Figure~\ref{fig:ThreeFiguresV2} depicts the intersections of $\mW^u(\Gamma_-)$ and $\mW^s(\Gamma_+)$ with the plane $\{\tau=1/2\}$, together with the curve $\{1/2\}\times\mathrm{fix}(S)$, for the same three choices of invariant curves $\Gamma$
considered in Figure~\ref{fig:ThreeFigures}.
The intersections $\mW^u(\Gamma_-)\cap\{\tau=1/2\}$ and $\mW^s(\Gamma_-)\cap\{\tau=1/2\}$ 
are indistinguishable in the figure, 
because they coincide in the considered section, in accordance with Proposition~\ref{prop:3supersymmetry}.
Conversely, as illustrated in
Figure~\ref{fig:3tau06intersection},
the manifolds $\mW^u(\Gamma_-)$ and $\mW^s(\Gamma_+)$ do not coincide on planes whose $\tau$-coordinate is not of the form $A^n(1/2)$ for some $n\in\Z$.

\begin{figure}
\includegraphics[width=\textwidth]{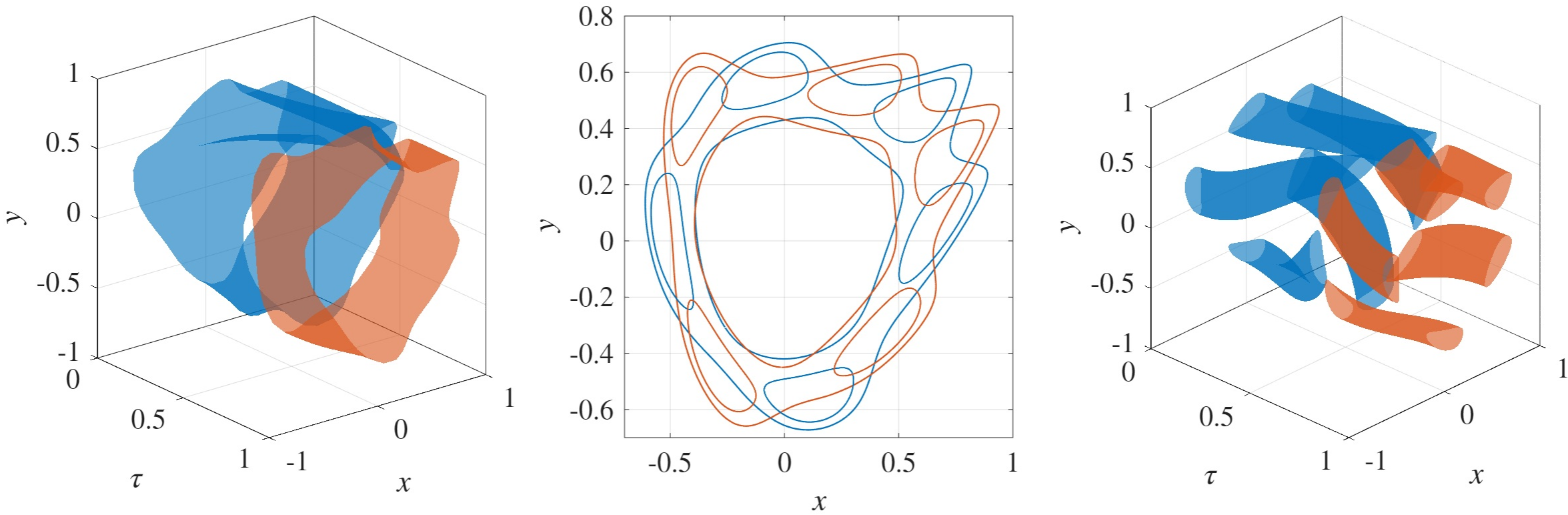}
\caption{
Stable and unstable invariant manifolds associated with invariant curves of the Hénon-related time-dependent map \eqref{eq:3Henonatres}.
The selected curves of the past system for the representations are the same chosen in Figure~\ref{fig:ThreeFigures}.
In the side panels, the unstable manifolds are depicted over the $\tau$-interval $[0,0.6]$ whereas the stable manifolds are depicted over $[0.6,1]$.
The middle panel shows the intersection of the unstable and stable manifolds at $\tau=0.6$ of the involved objects.
}
\label{fig:3tau06intersection}
\end{figure}

%THE SIMPLE EXAMPLE
\section{An example of Phase-Induced Tipping caused by Splitting}
In this section,  we consider a perturbative setting and, using a simple example, we discuss  
the role of the splitting of the invariant manifolds $\mW^u(\mK_-)$ and $\mW^s(\mK_+)$ 
in the context of tipping induced by changes of the phase.
In the unperturbed system, these manifolds coincide along a heteroclinic connection joining $\mK_-$ and $\mK_+$.
We consider a two-parameter family of perturbations depending on $(\ep,\delta)$ and 
study the splitting between the perturbed invariant manifolds.
Such analysis clarifies the role of this splitting in the onset of tipping phenomena within the framework of regular perturbation theory.

Let $0 < I_0 < I_1$, and consider the following two-parameter map 
\[
F_{\ep,\delta}\colon[0,1]\times\T^1\times\R\to[0,1]\times\T^1\times\R\,,
\]
already written in the compactified framework \eqref{eq:compactified} introduced in the previous section and given by
\begin{equation} \label{Fcomplete}
F_{\ep,\delta}(\tau,\varphi,I)
=
\big(A(\tau)\,,\,\varphi + \omega\,,\, (1+ \ep\,g_1(\tau))\, I+\delta\,g_2(\tau)\psi(\varphi)(I-I_0)+g_3(\tau) f(I)\big)\,,
\end{equation}
where $\w\in\R\setminus\mathbb{Q}$,
$f\colon\R\to\R$ satisfies $f(I_1)=0$, $\psi\colon\T^1\to\T^1$,
$g_i\colon[0,1]\to\R$ for $i\in\{1,2,3\}$ satisfy $g_2\geq0$ and
\begin{equation}\label{eq:g12conditions}
g_1(0)=g_1(1)=g_1'(0)=g_1'(1)=g_2(0)=g_2(1)=g_3(0)=0\,.
\end{equation}
In order to apply Theorem \ref{teor:2main_result},
we assume that all the involved functions are sufficiently regular to ensure that $F_{\ep,\delta}$ is a $C^{\ell+1}$ diffeomorphism.
%The  map $A$ is  given by \eqref{eq:2Adefinition} with $\alpha=1$.
It is not hard to check that $g_0(\varphi,I)=(\varphi+\omega,I)$ satisfies $\|D g_0(\varphi,I)\|_2=1$ and that
\[
DF_{\ep,\delta}(F_{\ep,\delta}^{-1}(0,\varphi,I))=
\left(
\begin{array}{ccc}
e^\alpha & 0 & 0\\
0 & 1 & 0 \\
\xi & 0 & 1
\end{array}
\right)\,,
\]
where $\xi=\delta\,g'_2(0)\psi(\varphi-\omega)(I-I_0)+g'_3(0)f(I)$. 
Let the map $A$ be given by \eqref{eq:2Adefinition} with $\alpha=1$. Then,
it can be 
shown that  $\|DF_{\ep,\delta}(F_{\ep,\delta}^{-1}(0,\varphi,I))\|_2 \leq 2$
whenever $|\xi|\leq 3/2$, which will be the situation in the examples below.
Thus, the gap condition (H4) of Theorem \ref{teor:2main_result} holds.

Note that, given $\mK:=\{(\varphi,I_1)\mid\,\varphi\in\T^1\}$, the tori
$\mK_-:=\{0\}\times\mK$ and $\mK_+:=\{1\}\times\mK$ are invariant under
$F_{\ep,\delta}$. We assume that $\mK_-$ satisfies the hypotheses of Theorem
\ref{teor:2main_result} and the hypotheses of an analogous time-reversed
theorem, so that there exist the invariant manifolds $\mW^u(\mK_-)$ and
$\mW^s(\mK_+)$.

Variation in the parameter $\ep$ is responsible (after a proper choice of $g_1$)
for the transverse oscillation associated with the splitting of the heteroclinic
orbit. The parameter $\delta$, in turn, introduces a dissipative effect towards
$I_0$ (in the $\ep$-unperturbed case) that decays asymptotically as $\tau$
approaches $0$ or $1$, since $g_2(0)=g_2(1)=0$, thereby causing the stable and
untable manifolds to separate as $\delta$ increases.
The function $\psi$ allows for an angular modulation of the dissipation and will allow us to produce some interesting examples.
Finally, the nonlinear term $f(I)$ allows several different dynamical regimes to be treated within a unified framework: asymptotically conservative behaviour in both the past and the future, asymptotically conservative behaviour in the past but asymptotically dissipative behaviour in the future, and so forth.
Notice that variations in $\ep$ and $\delta$ do not affect the dynamics on the past and future planes $\{\tau=0\}$ and $\{\tau=1\}$.

For the unperturbed system with $\ep=0$ and $\delta=0$, the manifold
\begin{equation}\label{eq:3unperturbed_heteroclinic}
[0,1]\times\T^1\times\{I_1\}
\end{equation}
is invariant under $F_{0,0}$ and is composed of heteroclinic connections joining the past invariant torus $\mK_-$ to the future invariant torus $\mK_+$.
This is the two-dimensional heteroclinic manifold $\mW^u(\mK_-)=\mW^s(\mK_+)$ whose
splitting we shall study as a function of the parameters $\ep$ and $\delta$.

The Melnikov function  generally  provides a  systematic way  for analyzing the first-order splitting  that leads to to phase-change induced tipping.
However, the examples provided in the next section involve perturbations simple enough that the splitting function can be computed easily, either numerically or analytically.

\subsection{Numerical evidence and results.}\label{sec:41numericalevidenceandresults}
In the simpler case in which $\psi\equiv1$ in~\eqref{Fcomplete}, the independence of the evolution in the variable $I$ with respect to the angle $\varphi$ allows to simplify the study to that of the reduced two-dimensional map
\begin{equation} \label{2d_namap}
\hat F_{\ep,\delta}(\tau,I)=\big(A(\tau)\,,\; (1+\ep\,g_1(\tau))\, I+\delta\,g_2(\tau)(I-I_0)+g_3(\tau)\, f(I)\big)\,,
\end{equation}
removing also the $\varphi$-dependence from the splitting function.
Throughout this section, we use this reduced planar model to illustrate
and discuss the numerical results. 
Using the  previously introduced diffeomorphism $\Theta(t)=1/(1+e^{-t})$ (that is, we consider $\alpha=1$ in~\eqref{eq:2Thetadefinition}), we compute the invariant manifolds of the dynamical system on $[0,1]\times\mathbb{R}$ and relate the relative splitting oscillations of these manifolds to different phase-tipping properties. 
In the examples below, the splitting function is computed by approximating the stable and unstable manifolds through the parameterization method in a fundamental domain and then globalized.

Figure \ref{fig:epsilondeltasplitting} illustrates the role of the parameters $\ep$ and $\delta$ in the splitting between $\mW^u(0,I_1)$ and $\mW^s(1,I_1)$ for \eqref{2d_namap}, for a fixed value of $I_1$.
As established in~\eqref{eq:3unperturbed_heteroclinic}, these invariant manifolds coincide
when $\ep=\delta=0$.
To simplify the setting considered here, we additionally take $g_3\equiv0$.
For the choice of $g_1(\tau)= \exp(-(\Theta^{-1}(\tau))^2)\,\sin(2\pi\Theta^{-1}(\tau))$ made
in this example, varying $\ep$ produces oscillations
between the invariant manifolds of the tori of the limiting systems.
In contrast, taking $g_2(I)=\tau\,(1-\tau)$,
the parameter $\delta$, which can be interpreted as dissipation towards $I_0$, produces a relative displacement between these invariant  unstable and stable manifolds. 
In particular, for $\delta$ large enough, the invariant manifolds do not intersect.
This explains the reduction in the size of the 
region where the splitting function is negative
as the value of $\delta$ is increased.

\begin{figure}
\includegraphics[width=\textwidth]{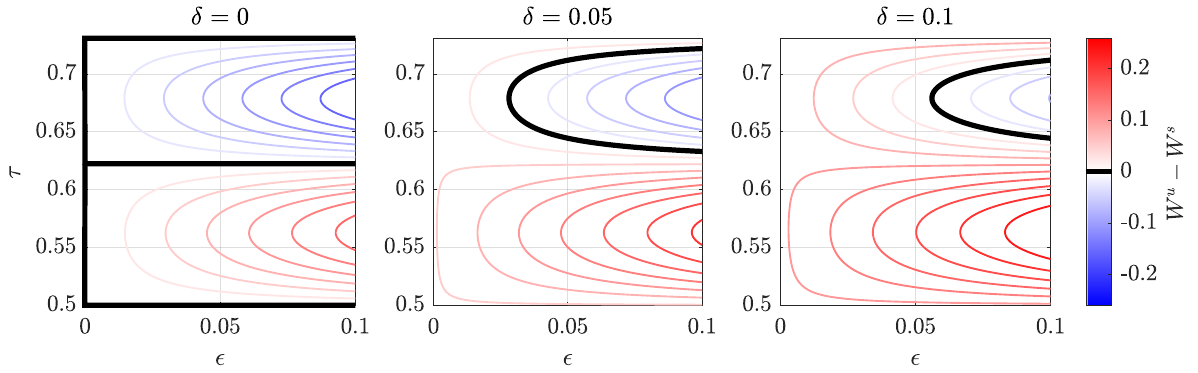}
\caption{
Level curves of the splitting function $W^{u}_{\mK_-}(\tau)-W^{s}_{\mK_+}(\tau)$ on the strip $\tau\in[\tau_0,A(\tau_0)]$ with $\tau_0=1/2$, shown as a function of $\ep$ (horizontal axis) and $\tau$ (vertical axis) for several fixed values of $\delta$.
In this example, we take $I_0=0$, $I_1=1$, $\psi\equiv1$, and $g_{3}\equiv 0$, while the fiber dynamics is defined using
$g_1(\tau)=
\exp(-(\Theta^{-1}(\tau))^2)\,\sin(2\pi\Theta^{-1}(\tau))$
and
$g_{2}(\tau)=\tau\,(1-\tau)$.
Colors encode both the sign and the magnitude of the difference between the
unstable and stable manifolds: red corresponds to positive values, blue to
negative values, and in black the zero level curve.
The curve levels are equally spaced between the maximum and the minimum reached
within the three panels.
}
\label{fig:epsilondeltasplitting}
\end{figure}

Figure \ref{fig:entangledsplitting} shows the entanglement of $\mW^u(0,I_1)$ and $\mW^s(1,I_1)$ 
varying $I_1$, for different values of $\ep$, with $\delta=0$ fixed and $g_3\equiv0$.
So, the term involving $g_2$ plays no role.
Here, $g_1$ is chosen as in Figure~\ref{fig:epsilondeltasplitting}.
If we add the the third variable $\varphi$  back
to the representation of Figure \ref{fig:entangledsplitting}, keeping
$\psi\equiv1$, we would observe that the two-dimensional manifolds
$\mW^u(\mK_-)$ and $\mW^s(\mK_+)$, where $\mK_-:=\{0\}\times\T^1\times\{I_1\}$
and $\mK_+:=\{1\}\times\T^1\times\{I_1\}$,
intersect on a sequence of circles for each choice of $I_1$.
Each point on one of these circles is a representative of a different
heteroclinic orbit within the fundamental domain, hence,
we obtain a one-dimensional continuum of heteroclinic orbits connecting $\mK_-$
to $\mK_+$.

\begin{figure}
\includegraphics[width=\textwidth]{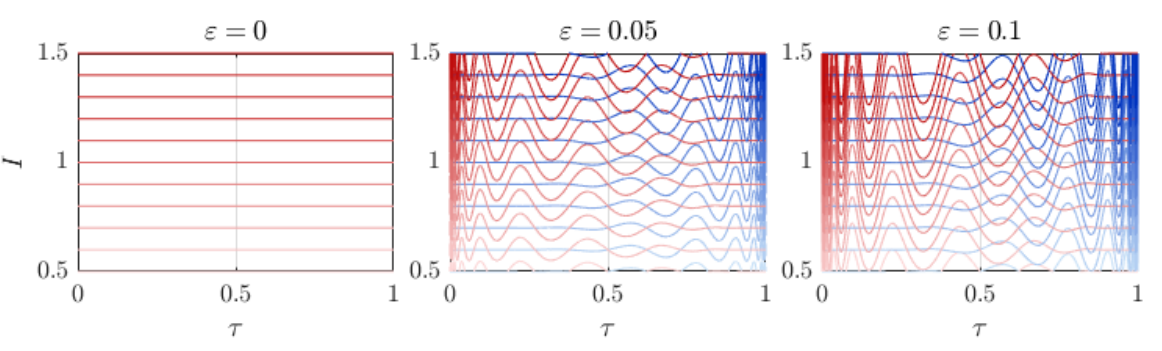}
\caption{Unstable and stable invariant manifolds for $F_{\ep,\delta}$ in the
$(\tau,I)$-plane for a family of fixed points $(0,I_1)$ and $(1,I_1)$,
respectively, with $I_1$ varying over a prescribed range.
Each panel corresponds to a different value of $\ep$, while $\delta=0$ is kept fixed.
The function $g_1$ has been chosen as in Figure \ref{fig:epsilondeltasplitting}.
The families of invariant manifolds $\mathcal{W}^u(0,I_1)$ is plotted in blue
and $\mathcal{W}^s(1,I_1)$ in red; different shades distinguish individual
members.
}
\label{fig:entangledsplitting}
\end{figure}

\medskip

Going back to the planar map, in Figure~\ref{fig:edampedsplitting}, we take $\ep=\delta=0$,
so that the terms involving $g_1$ and $g_2$ play no role in \eqref{2d_namap}.
We consider 
$$g_3(\tau)= \exp(-(\Theta^{-1}(\tau))^2)\,\sin(2\pi\Theta^{-1}(\tau)) \quad \text{and} \quad f(I)=\eta\,(I-0.5)(I-1.5),$$ 
to  illustrate the values of $I$ that are reachable from the
unstable manifold  $\mW^u(0,I)$ parametrized by $\tau\mapsto(\tau,W_I^u(\tau))$.
That is, we consider the  reduced two-dimensional map 
\[
\hat F_{0,0}(\tau,I)=\big(A(\tau),\,I+g_3(\tau)\,f(I)\big)\,.
\]
The interval of critical points of the future equation that are accessible through orbits on the unstable manifold of $(0,I)$ has length
$$\limsup_{\bar\tau\to1^-}\left(\max_{\tau\in[\bar\tau,1]}W_I^u(\tau)-\min_{\tau\in[\bar\tau,1]}W_I^u(\tau) \right).$$
Note that, since $f(I)<0$ for $I\in(0.5,1.5)$, the invariance equation $W^u_I(A(\tau))=W_I^u(\tau)+g_3(\tau)f(W_I^u(\tau))$ shows that $W_I^u(A(\tau))<W_I^u(\tau)$ whenever $g_3(\tau)>0$ and $W_I^u(A(\tau))>W_I^u(\tau)$ whenever $g_3(\tau)<0$.
Since it can be deduced from \eqref{eq:torecall2definitionLambdaHat} and \eqref{eq:torecall3definitionA} that $\sin(2\pi\Theta^{-1}(A^n(\tau)))=\sin(2\pi\Theta^{-1}(\tau))$ for all $\tau\in[0,1]$ and $n\in\Z$, we get that $(g_3(A^n(\tau)))_{n\in\Z}$ has constant sign for every $\tau\in(0,1)$.
So, it follows that the length of the interval of accesible orbits coincides with
$\max_{\tau\in[0,1]}W_I^u(\tau)-\min_{\tau\in[0,1]}W_I^u(\tau)$.
This quantity measures the 
size of the set of equilibrium points of the future equation 
asymptotically connected to the point $(0,I)$ by a heteroclinic orbit.
The maximum spread of the heteroclinic orbits is attained at $I = 0.5$.
The overall described situation is consistent with
phase-induced tipping in the sense that
nearby points on the invariant manifold, selected by different phases,
may have different $\upomega$-limit sets. Nevertheless, this cannot
properly be regarded as a critical transition, since the
$\upomega$-limit set does not change abruptly as a function of the phase.

\begin{figure}
\includegraphics[width=0.75\textwidth]{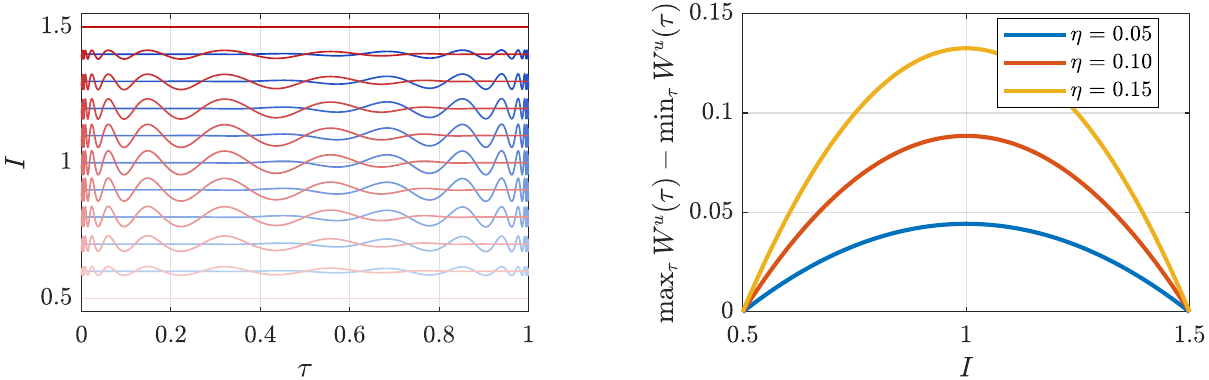}
\caption{In the left panel, unstable and stable invariant manifolds for $F_{\ep,\delta}$ in the $(\tau,I)$-plane for a family of fixed points $(0,I_1)$ and $(1,I_1)$, respectively, computed with $\ep=0$, $\delta=0$, $f(I)=\eta\,(I-0.5)(I-1.5)$, $\eta=0.1$, $g_3(\tau)=
\exp(-(\Theta^{-1}(\tau))^2)\,\sin(2\pi\Theta^{-1}(\tau))$, and a range of values of $I_1$.
In the right panel, the range of $W^u_I(\tau)$ 
is shown for different values of $\eta$.
}
\label{fig:edampedsplitting}
\end{figure}

\medskip

Finally, Figure~\ref{fig:ecriticaltransitionsplitting}  displays results corresponding to~\eqref{2d_namap} with
\begin{align*}
&g_1(\tau)=\exp(-(\Theta^{-1}(\tau))^2)\sin(2\pi\Theta^{-1}(\tau)), 
&f(I)=\frac{2}{\pi}\arctan(2(I-1))+1-I,
\end{align*}
and $g_3(\tau)=\tau$.
We set $\delta=0$, so that the term involving $g_2$ plays no role.
It is not hard to check that the future system, which is dissipative, has three
fixed points, located at $I_1=1/2$, $I_1=1$, and $I_1=3/2$.
The fixed point at $I_1=1$ is repelling, whereas those at $I_1=1/2$ and $I_1=3/2$ are attracting.
Thus, with these choices, system~\eqref{2d_namap} is asymptotically dissipative as $t\to\infty$ and asymptotically conservative as $t\to-\infty$.
Initially, for $\ep = 0$, the unstable manifold of $(0,1)$ and the stable manifold of $(1,1)$ coincide.
When $\ep$ becomes positive they split in a similar fashion to that previously discussed in Figures \ref{fig:epsilondeltasplitting}, \ref{fig:entangledsplitting} and \ref{fig:edampedsplitting}.
Nevertheless, a key difference of the case shown in
Figure \ref{fig:ecriticaltransitionsplitting} is that
the only possible $\upomega$-limit sets are $(1,1/2)$, $(1,1)$ and $(3/2,1)$.
As seen in the figure, every trajectory contained in the unstable manifold
$\mW^u(0,1)$ for which $W^u-W^s>0$ occurs converges to $(3/2,1)$,
as happens, in particular, for the orbit formed by the black dots shown in the figure.
Likewise, every trajectory for which case $W^u-W^s<0$ occurs converges to $(1,1/2)$, including the green orbit depicted.
In this context, the phase-induced tipping observed here can be characterized as a genuine critical transition: once $\ep$ is fixed, arbitrarily small variations in the phase---that is, small changes in the choice of trajectory along the unstable manifold---lead to sharply distinct $\upomega$-limit sets.

\begin{figure}
\includegraphics[width=\textwidth]{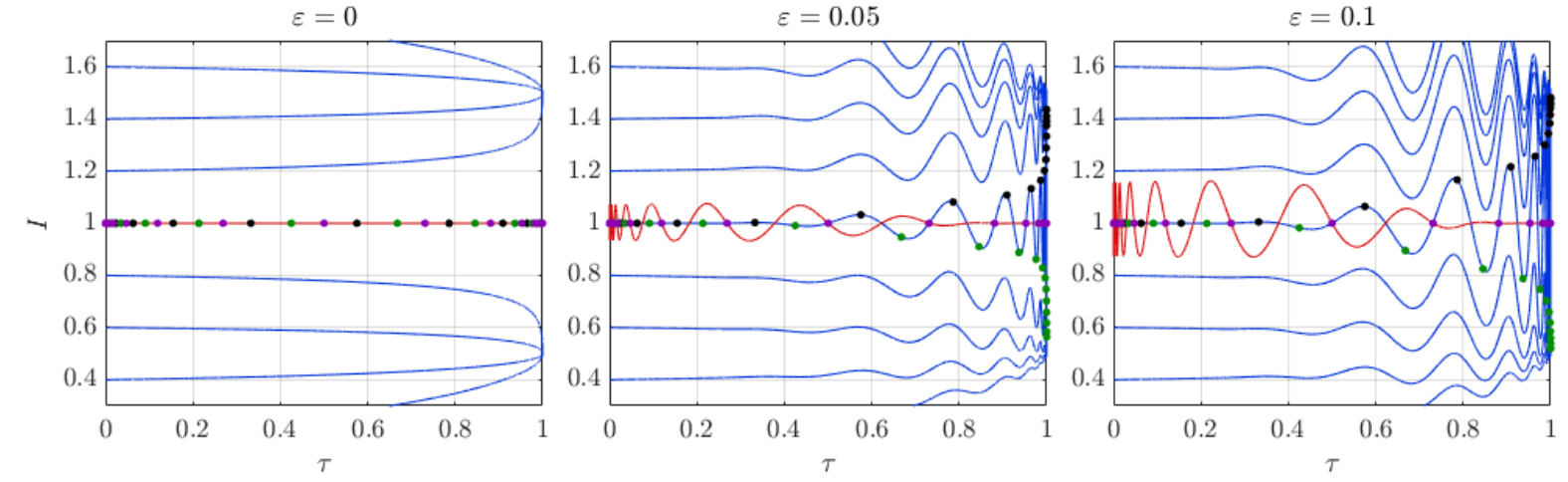}

\caption{We consider $F_{\ep,\delta}$ with $\delta=0$, $I_0=1$, $g_1(\tau)=\exp(-(\Theta^{-1}(\tau))^2)\sin(2\pi\Theta^{-1}(\tau))$, $g_3(\tau)=\tau$, $f(I)=(2I_0/\pi)\arctan(2(I/I_0-1))+I_0-I$.
For different values of $\ep$, we display in blue $\mathcal{W}^u(0,I_1)$  for a range of values of $I_1$, together 
with $\mathcal{W}^s(1,1)$ in red.
The color points correspond to orbits on $(\tau,W^u_1(\tau))$ obtained from the sections $\tau_n=A^n(\tau_0)$ with $\tau_0=0.425$ (greeen), $\tau_0=0.5$ (purple), and $\tau_0=0.575$ (black), respectively.}
\label{fig:ecriticaltransitionsplitting}
\end{figure}

\subsection{Surgical intersections between manifolds.}\label{sec:42surgicalintersections}
In this section we first provide
an example such that
the intersection of the two-dimensional manifolds $\mW^u_{\ep,\delta}(\mK_-)$ and $\mW^s_{\ep,\delta}(\mK_+)$  reduces to a single orbit
$(A^n(\tau_*),x_n)_{n \in \mathbb{Z}}$
for a given $\tau_* \in (0,1)$.
After that, we modify the example so that the corresponding invariant manifolds
intersect in a Cantor set of orbits.
These examples will be relevant in underscoring the connection between the two different approaches to phase-induced tipping given in Section \ref{subsec:phaseinducedtipping}.
 For the sake of clarity, throughout this section we explicitly include the subscripts $\ep$ and $\delta$ in the notation for the stable and unstable invariant manifolds.

\subsubsection{A single orbit as the intersection of 2D invariant manifolds} \label{1orb}
To construct such an example,
let us take $g_3\equiv0$
in~\eqref{Fcomplete}.
That is, consider on $[0,1]\times\T^1\times\R$ the map
\[
F_{\ep,\delta}(\tau,\varphi,I)=\big(A(\tau),\,\varphi+\omega,\;(1+\ep\,g_1(\tau))\,I+\delta\,g_2(\tau)\,\psi(\varphi)\,(I-I_0)\big)\,.
\]
Starting from the unperturbed system $\ep=\delta=0$, we construct such an example in two steps.
First,
we choose a function $g_1$ and fix a sufficiently small $\ep>0$, thereby obtaining a splitting such as that shown in Figure~\ref{fig:entangledsplitting}.
Second, we choose suitable functions $g_2\geq0$ and $\psi\geq0$ so that the resulting perturbed system, with 
the previously fixed value of $\ep$ 
and $\delta=1$, has the desired properties.

We consider the same choice of $g_1(\tau)=
\exp(-(\Theta^{-1}(\tau))^2)\,\sin(2\pi\Theta^{-1}(\tau))$ as in Figure \ref{fig:entangledsplitting}, where $\Theta(t)=1/(1+e^{-t})$.
For $\delta=0$, thereby considering a $\varphi$-independent parametric variation, we fix a small value of $\ep>0$.
Let $(\tau,\varphi)\mapsto(\tau,\varphi,\hat W_\ep^
u(\tau))$ with $\hat W_\ep^u\colon[0,1)\to\R$ be a parametrization of $\mW_{\ep,0}^u(\mK_-)$, and let $\hat W_\ep^s$ define the analogous parametrization of $\mW_{\ep,0}^s(\mK_+)$, 
defined on $(0,1]$. As before, given $I_1 \in \R$, we denote $\mK_-=\{0\}\times\T^1\times\{I_1\}$ and $\mK_+=\{1\}\times\T^1\times\{I_1\}$.
Both $\hat W_\ep^u$ and $\hat W_\ep^s$ satisfy the invariance equation
\begin{equation}\label{eq:4epsilon_invariance_eq}
\hat W_\ep(A(\tau))=(1+\ep\,g_1(\tau))\,\hat W_\ep(\tau)\,,
\end{equation}
and, moreover, $\hat W_\ep^u(0)=\hat W_\ep^s(1)=I_1$.
Note that the choice of $g_1$ guarantees that $g_1(A^n(1/2))=0$ for every $n\in\mathbb{Z}$. Substituting this identity into \eqref{eq:4epsilon_invariance_eq}, we find that both $\hat W_\ep^u$ and $\hat W_\ep^s$ are constant along the orbit of 
the point $(1/2,I_1)$; that is, one has
$\hat{W}_\ep(A^n(1/2))=\hat W_\ep(A^m(1/2))$ for all $n,m\in\mathbb{Z}$.
Letting $m\to\infty$ and using the asymptotic behavior of the corresponding parametrizations, we conclude that $\hat W_\ep^u(A^n(1/2))=\hat W_\ep^s(A^n(1/2))=I_1$ for every $n\in\mathbb{Z}$.

Now, we look for appropriate functions $g_2$ and $\psi$ such that $\mW^u_{\ep,\delta}(\mK_-)\cap\mW^s_{\ep,\delta}(\mK_+)$ reduces to a single orbit when $\delta=1$.
Figure~\ref{fig:explicacion_atrajectory} displays the functions involved in the construction; referring to it may facilitate the reading of the text.
First, we impose a simplifying condition on the function $g_2$, which will allow us to compute the splitting function explicitly on a prescribed interval for the perturbation in $\ep$ and $\delta$.
We fix $\bar\tau\in(0,1)$ such that $\hat W^u_\ep(\bar\tau)>\hat W^s_\ep(\bar\tau)$,
in particular, $\bar\tau\not\in\{A^n(1/2)\}_n$.
We choose the function $g_2$ in such a way that $g_2(\tau)>0$ for $\tau\in(\bar\tau,A(\bar\tau))$ and $g_2(\tau)=0$ otherwise.
Notice that, under these assumptions,
\[
F_{\ep,\delta}(\tau,\varphi,I)=\big(A(\tau),\,\varphi+\w,\,(1+\ep\,g_1(\tau))\,I\big)=F_{\ep,0}(\tau,\varphi,I)
\]
for $\tau\not\in(\bar\tau,A(\bar\tau))$.
Hence, if the unstable invariant manifold $\mW^u_{\ep,\delta}(\mK_-)$ is parametrized by $(\tau,\varphi)\mapsto (\tau,\varphi,W_{\ep,\delta}^u(\tau,\varphi))$, the map $W_{\ep,\delta}^u$ satisfies the invariance equation \eqref{eq:4epsilon_invariance_eq} for $\tau\leq\bar\tau$.
This, together with the asymptotic behaviour of $W_{\ep,\delta}^u(\tau,\varphi)\to I_1$ as $\tau\to0$ 
ensures that $W^u_{\ep,\delta}(\tau,\varphi)=\hat W^u_\ep(\tau)$ for all $\tau\leq A(\bar\tau)$.
Analogously, if the stable invariant manifold $\mW^s_{\ep,\delta}(\mK_+)$ is parametrized by $(\tau,\varphi)\mapsto(\tau,\varphi,W^s_{\ep,\delta}(\tau,\varphi))$, then we obtain that $W^s_{\ep,\delta}(\tau,\varphi)=\hat W^s_\ep(\tau)$ for all $\tau\geq A(\bar\tau)$.
Therefore, for $\tau\in[\bar\tau,A(\bar\tau)]$ we have that
\[
\begin{split}
W^s_{\ep,\delta}(A(\tau),\varphi+\omega)
&=\hat W^s_\ep(A(\tau))=(1+\ep\,g_1(\tau))\,\hat W^s_\ep(\tau)\\
&=\big(1+\ep\,g_1(\tau)+\delta\,g_2(\tau)\,\psi(\varphi)\big)\,W^s_{\ep,\delta}(\tau,\varphi)-\delta\,I_0\,g_2(\tau)\,\psi(\varphi)\,,
\end{split}
\]
from where we get that
\[
W^s_{\ep,\delta}(\tau,\varphi)
=\frac{(1+\ep\,g_1(\tau))\,\hat W^s_\ep(\tau)+\delta\,I_0\,g_2(\tau)\,\psi(\varphi)}{1+\ep\,g_1(\tau)+\delta\, g_2(\tau)\,\psi(\varphi)}\,,\quad\text{for }\tau\in[\bar\tau,A(\bar\tau)]\,.
\]
Therefore,
we can explicitly write down the splitting function for $\tau\in[\bar\tau,A(\bar\tau)]$ as
\begin{equation}
    \label{eq:4splittingsurgical}
S_{\ep,\delta}(\tau,\varphi)=\frac{(1+\ep\,g_1(\tau))(\hat W^u_\ep(\tau)-\hat W_\ep^s(\tau))+\delta\,g_2(\tau)\,\psi(\varphi)(\hat W_\ep^u(\tau)-I_0)}{1+\ep\,g_1(\tau)+\delta\,g_2(\tau)\,\psi(\varphi)}\,.
\end{equation}
And thus, for $(\tau,\varphi)\in[\bar\tau,A(\bar\tau))\times\T^1$, we get that $S_{\ep,\delta}(\tau,\varphi)=0$ if and only if
\[
(1+\ep\,g_1(\tau))(\hat W^u_\ep(\tau)-\hat W_\ep^s(\tau))+\delta\,g_2(\tau)\,\psi(\varphi)(\hat W_\ep^u(\tau)-I_0)=0\,,
\]
and in fact $S_{\ep,\delta}$ and $(1+\ep\,g_1)(\hat W^u_\ep-\hat W_\ep^s)+\delta\,g_2\,\psi(\hat W_\ep^u-I_0)$ have the same sign at every point if $\ep$ is sufficiently small. 

Recall that 
we seek the desired tangency to take place for $\delta=1$.
Hence,
we now choose the functions $g_2$ and $\psi$ so that $S_{\ep,1}$
vanishes at exactly one point in $[\bar\tau,A(\bar\tau))\times\T^1$.
We take
\[
g_2(\tau)=\eta_0(\tau)\,\frac{1+\ep\, g_1(\tau)}{\hat W_\ep^u(\tau)-I_0}\,,
\]
for some nonnegative function  $\eta_0\colon[0,1]\to[0,\infty)$ supported on $[\bar\tau,A(\bar\tau)]$. 
Thus, 
$S_{\ep,1}$ has the same sign as
\begin{equation}\label{eq:42splitting_function_vanishes}
\hat W_\ep^u(\tau)-\hat W_\ep^s(\tau)+\eta_0(\tau)\,\psi(\varphi)\,.
\end{equation}

Consider the compact set $K=\{\tau\in[\bar\tau,A(\bar\tau)]\mid\, \hat W^u_\ep(\tau)-\hat W^s_\ep(\tau)\leq 0\}$, and assume that $K\subset(\bar\tau,A(\bar\tau))$.
Note that this last assumption entails no loss of generality, since $\bar\tau$ can be chosen again, if necessary, so that it is satisfied.
Then, we select $\tau_*\in K$ with $\hat W^u_\ep(\tau^*)-\hat W^s_\ep(\tau^*)<0$ and define
\[
q(\tau):=(\tau-\tau_*)^2\,.
\]
Next,
consider the function $p(\tau)=q(\tau)-(\hat W^u_\ep(\tau)-\hat W^s_\ep(\tau))$ and note that $p(\tau)>0$ for $\tau\in K$.
Then, take an open neighbourhood $K\subset U\subset (\bar\tau,A(\bar\tau))$ such that $p(\tau)>0$ for all $\tau\in U$.
Finally, consider a $C^\infty$ function $\eta_1$
taking values in $[0,1]$ such that $\eta_1(\tau)=1$ for all $\tau\in K$ and $\eta_1(\tau)=0$ for all $\tau\not\in U$, and define
\[
\eta_0(\tau):=\eta_1(\tau)\, p(\tau)\,,
\]
\begin{figure}
\includegraphics[width=\textwidth]{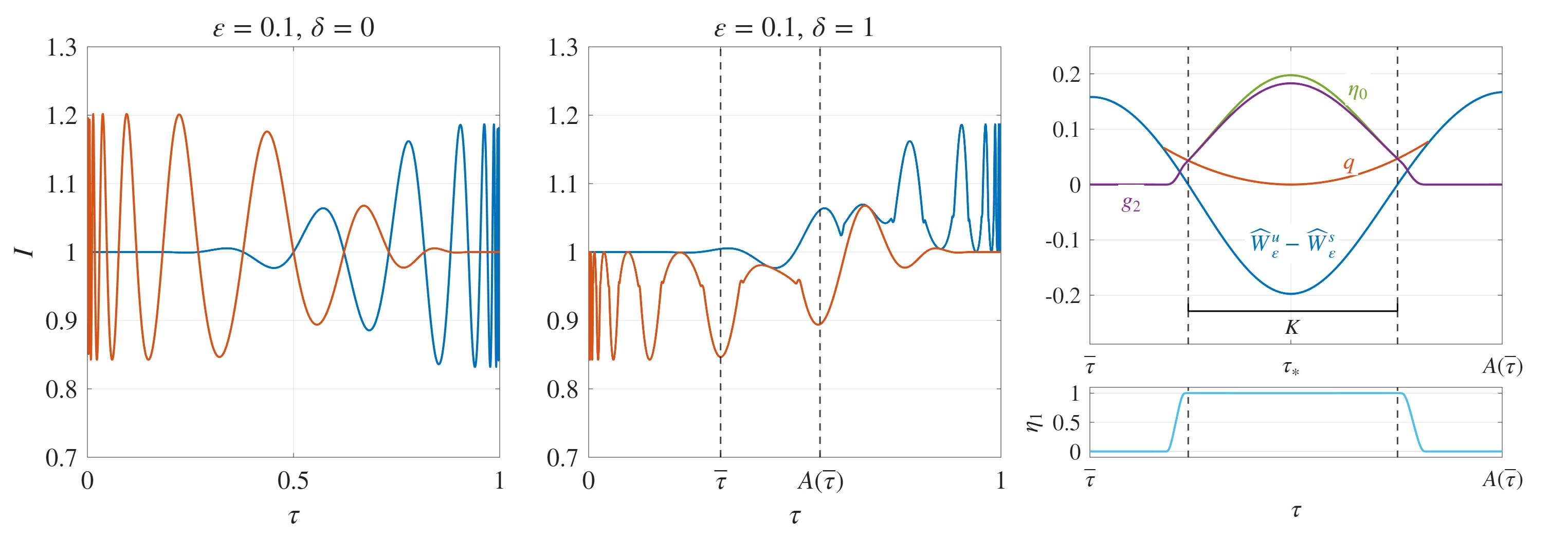}
\caption{ 
Functions involved in the construction of Section~\ref{1orb}, with $\psi(\varphi)\equiv1$ and $\varepsilon=0.1$.
The left panel shows $\mW^u(0,1)$ and $\mW^s(1,1)$ for the map with $\delta=0$.
The central panel displays the tangency between the
invariant manifolds  taking place at $\delta=1$ 
when adding the perturbation $g_2$
with $\bar\tau=0.32$, 
$I_0=0$, $\tau_*=0.43761$, $q(\tau)=12\,(\tau-\tau_*)^2$, $K\approx[0.37760, 0.49973]$ and $U=(0.36299, 0.51835)$.
The upper-right panel shows, over the interval $[\bar{\tau},A(\bar{\tau})]$, the $\delta$-unperturbed splitting function, together with the functions $q$, $\eta_0$, and $g_2$ corresponding to the previous choices.
The vertical dashed lines mark the zeros of $\hat W_\ep^u-\hat W_\ep^s$, which delimit the interval
$K$. The lower-right panel depicts the cutoff function $\eta_1$, whose plateau coincides with the interval $K$.
}
\label{fig:explicacion_atrajectory}
\end{figure}
Note that $\eta_0(\tau)\geq0$ for all $\tau \in [0,1]$.
With these choices, since
\begin{equation}\label{eq:42auxiliar}
\hat W^u_\ep(\tau)-\hat W^s_\ep(\tau)+\eta_0(\tau)=(1-\eta_1(\tau))(\hat W^u_\ep(\tau)-\hat W^s_\ep(\tau))+\eta_1(\tau)(\tau-\tau_*)^2\,,
\end{equation}
it is not hard to check that $\hat W_\ep^u(\tau)-\hat W_\ep^s(\tau)+\eta_0(\tau)>0$ for all $\tau\in[\bar\tau,\tau_*)\cup(\tau_*,A(\bar\tau)]$ and that $\hat W_\ep^u(\tau_*)-\hat W_\ep^s(\tau_*)+\eta_0(\tau_*)=0$: for $\tau\in K\setminus\{\tau_*\}$ the first term in the right hand side of \eqref{eq:42auxiliar} vanishes and the second one is strictly positive, for $\tau\in[\bar\tau,A(\bar\tau)]\setminus K$ the first term is nonnegative and the second one is strictly positive, and for $\tau=\tau_*$ both terms vanish.

In the particular setting illustrated in Figure~\ref{fig:explicacion_atrajectory}, $K$ is a closed interval and $U$ is an open interval containing it, and we construct $\eta_1$ as follows. Let $\xi\colon\mathbb{R}\to\mathbb{R}$ be defined by $\xi(s)=0$ for $s\in(-\infty,0]$, $\xi(s)=e^{-1/s}/\bigl(e^{-1/s}+e^{-1/(1-s)}\bigr)$ for $s\in(0,1)$, and $\xi(s)=1$ for $s\in[1,\infty)$.
Let $T_1$ be the affine transformation mapping $\inf U$ to $0$ and $\inf K$ to $1$, and let $T_2$ be the affine transformation mapping $\sup K$ to $1$ and $\sup U$ to $0$.
We then define $\eta_1$ by $\eta_1(\tau)=\xi(T_1(\tau))\xi(T_2(\tau))$. By construction, $\eta_1$ belongs to $C^\infty$, is identically equal to $1$ on $K$, and vanishes outside $U$.

Now, for a fixed $\varphi_*\in\T^1$, we consider
\begin{equation}
\label{eq:4psistandard}
\psi(\varphi)=2-\cos(2\pi(\varphi-\varphi_*))\,,
\end{equation}
 so that, from \eqref{eq:42splitting_function_vanishes} and using the fact that $\eta_0\geq0$, one obtains that
\[
S_{\ep,1}(\tau,\varphi)>0\quad\text{for all }(\tau,\varphi)\in[\bar\tau,A(\bar\tau)]\times\T^1\setminus\{(\tau_*,\varphi_*)\}\quad\text{and}\quad S_{\ep,1}(\tau_*,\varphi_*)=0\,.
\]
That is, the orbit of $(\tau_*,\varphi_*,W^u_{\ep,1}(\tau_*,\varphi_*))$ is the unique orbit which belongs to $\mW^u(\mK_-)\cap\mW^s(\mK_+)$, as desired.
In Figure \ref{fig:4levelcurves_surgical1}, the level curves of the splitting map $S_{\ep,\delta}$ are represented for this example.
In particular, for $\delta<1$ but close to it, we see that the
intersection between the nodal lines of $\mW^u(\mK_-)$ and $\mW^s(\mK_+)$ in the fundamental domain $[\bar\tau,A(\bar\tau)]\times\T^1$ is of bubble type~\cite{LomeliRamirez2008}.
\begin{figure}
\includegraphics[width=\textwidth]{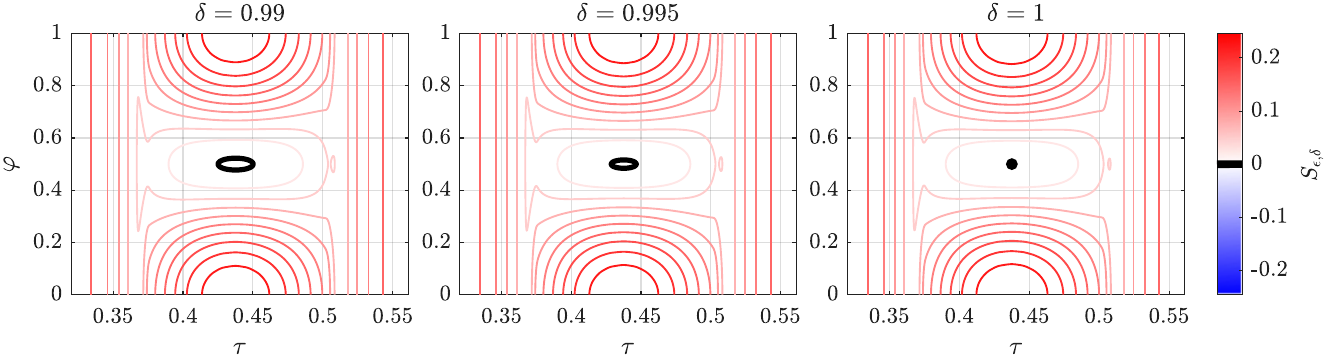}
\caption{Level curves of $S_{\ep,\delta}(\tau,\varphi)$, given in \eqref{eq:4splittingsurgical}.
 With the exception of $\delta$, which takes different values on each panel, all other functions and parameters are chosen as in Figure~\ref{fig:explicacion_atrajectory}.
The thick black curve, which reduces to a point in the right panel, denotes the zero level $S_{\ep,\delta}=0$.
}
\label{fig:4levelcurves_surgical1}
\end{figure}
\begin{figure}
\includegraphics[width=\textwidth]{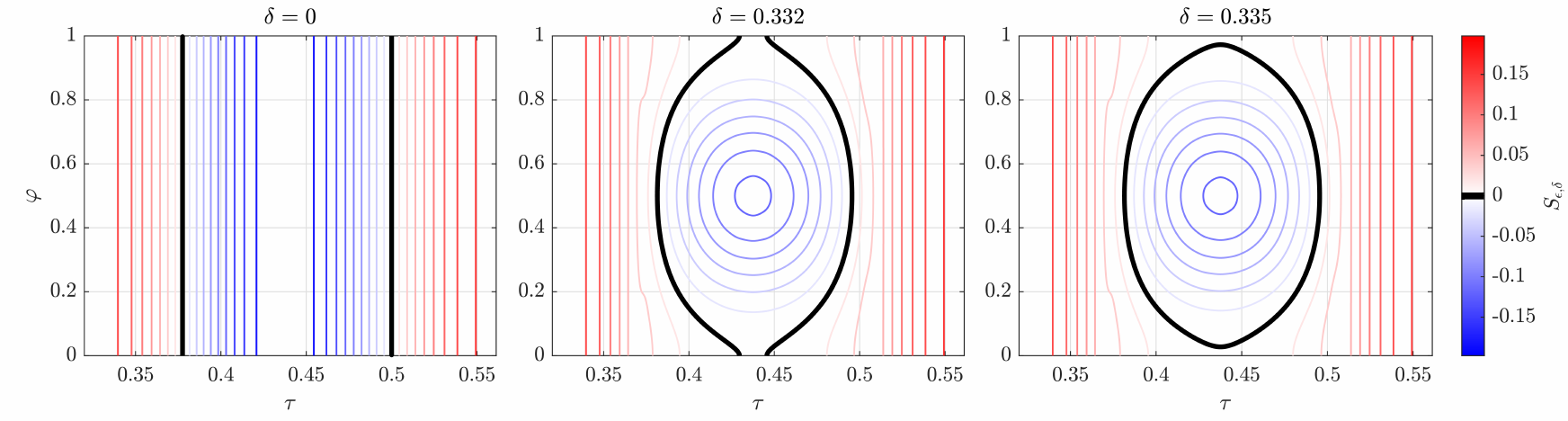}
\caption{Level curves of $S_{\ep,\delta}(\tau,\varphi)$, given in \eqref{eq:4splittingsurgical}, with the same choices in Figure~\ref{fig:4levelcurves_surgical1}, except for the values of $\delta$.
These values of $\delta$ illustrate how perturbing the parameter transforms the transverse intersection of the manifolds at $\delta=0$ into a bubble-type intersection, which subsequently evolves into the single-point intersection shown in Figure~\ref{fig:4levelcurves_surgical1}, passing through an intermediate value at which the intersection is of saddle type for some $\delta_0\in(0.332,0.335)$.
}
\label{fig:4levelcurves_surgical3}
\end{figure}

It is worth noting that this example illustrates the difference between  the two approaches for analyzing tipping induced by phase-change introduced in Section \ref{subsec:phaseinducedtipping}.
As constructed above (and illustrated in Figure \ref{fig:4levelcurves_surgical1}), there exists a unique point $(\tau_*,\varphi_*)\in[\bar\tau,A(\bar\tau))\times\T^1$ such that
$(\tau_*,\varphi_*,W^u(\tau_*,\varphi_*))\in\mW^u(\mK_-)\cap\mW^s(\mK_+)$.
By the analogue of Theorem \ref{teor:2main_result}(iii) stated in Remark \ref{rmk:2mK_+}, we obtain that

\[
\lim_{n\to\infty}d\big( F^n(\tau_*,\varphi_*,W^u(\tau_*,\varphi_*)),\mK_+\big)=0\,,
\]
while, for any $(\tau,\varphi)\in[\bar\tau,A(\bar\tau))\times\T^1\setminus\{(\tau_*,\varphi_*)\}$,
\[
\limsup_{n\to\infty} d\big(F^n(\tau,\varphi,W^u(\tau,\varphi)),\mK_+\big)>0\,.\quad 
\]
We now consider the unstable fiber attached to $(0,\varphi_0,I_1)$ for some $\varphi_0\in\T^1$ such that it does not exist $m\in\Z$ such that $\varphi_0=\varphi_*+m\,\w$, that is, the points parametrized by $\tau\mapsto(\tau,\varphi_0,W^u(\tau,\varphi_0))$.
The previous statements show that $\mK_+$ is not the $\upomega$-limit set of any such point.
This reinforces the need to choose carefully the appropriate definition of tipping due to phase, depending on the problem under analysis.
\medskip

\subsubsection{A discrete set of orbits in the intersection of 2D invariant manifolds }
One advantage of the previous construction is that we can slightly modify the definitions of the functions $q$ and $\psi$ to obtain more complicated intersections.
For example, it suffices to replace the definition of $q$ with $q(\tau):=(\tau-\tau_1)^2(\tau-\tau_2)^2$ for $\tau_1,\tau_2\in K$ to get the tangency of $\mW^u(\mK_-)$ and $\mW^s(\mK_+)$ on two orbits, those of $(\tau_1,\varphi_*,W^u_{\ep,1}(\tau_1,\varphi_*))$ and $(\tau_2,\varphi_*,W^u_{\ep,1}(\tau_2,\varphi_*))$.
We obtain the analogous effect in the other variable if we replace $\psi$ with $\psi(\varphi):=2-\cos(4\pi(\varphi-\varphi_*))$.
One such replacement is shown in Figure \ref{fig:4levelcurves_surgical2}.
Figure~\ref{fig:4huevera} replaces $\psi$ with another combination of trigonometric functions, making different intersections dissapear at different values of the parameter $\delta$.
\begin{figure}
\includegraphics[width=\textwidth]{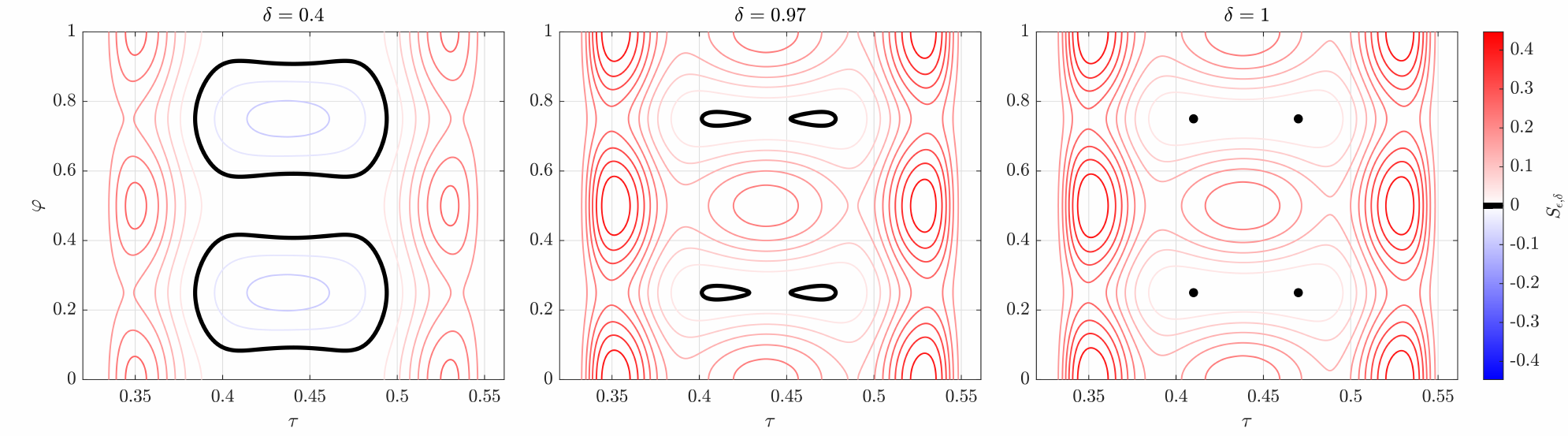}
\caption{Level curves of $S_{\ep,\delta}(\tau,\varphi)$, given in \eqref{eq:4splittingsurgical}, with the same choices of Figure \ref{fig:4levelcurves_surgical1}, except for $q(\tau)=10^{4}(\tau-0.41)^2(\tau-0.47)^2$, $U=(0.32288, 0.55817)$ and $\psi(\varphi)=2-\cos(4\pi(\varphi-0.25))$.
Four points of tangency between the manifolds are obtained.
}
\label{fig:4levelcurves_surgical2}
\end{figure}
\begin{figure}
\includegraphics[width=\textwidth]{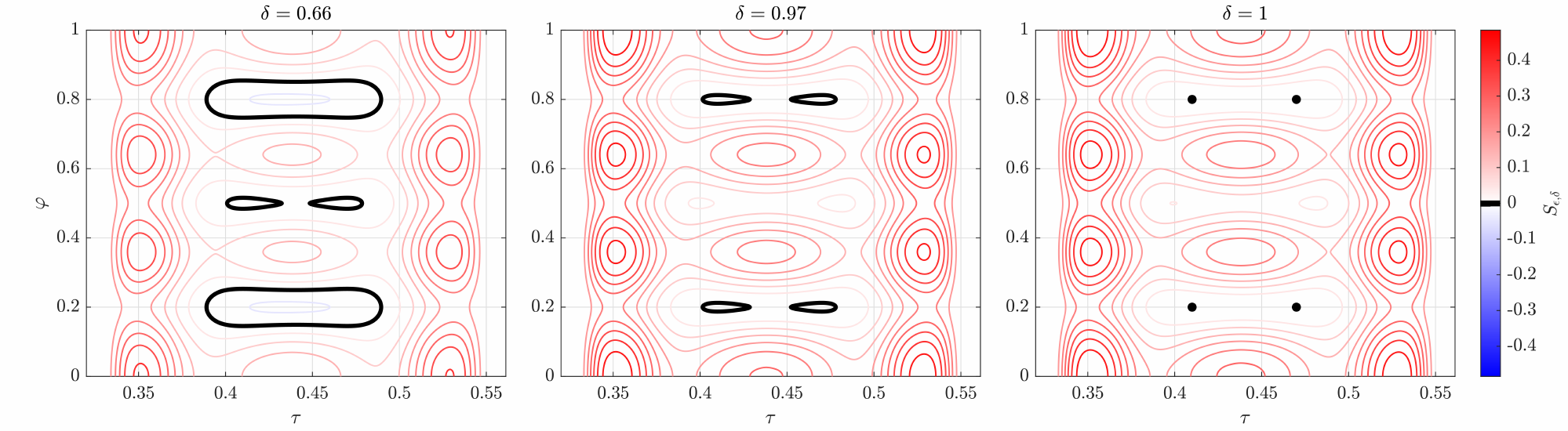}
\caption{Level curves of $S_{\ep,\delta}(\tau,\varphi)$, given in \eqref{eq:4splittingsurgical}, with the same choices of Figure \ref{fig:4levelcurves_surgical1}, except for $\psi(\varphi)=2.2470501568128455- 0.098762\cos(2\pi(\varphi-0.5))+0.525561\cos(4\pi(\varphi-0.5))-0.955835\cos(6\pi(\varphi-0.5))-0.255956\cos(8\pi(\varphi-0.5))$, which takes its global minimum (with value close to 1) at $\varphi=0.2$ and $\varphi=0.8$, and takes another local minimum at $\varphi=0.5$.
It causes the intersections near $\varphi=0.5$ to disappear before the others.
}
\label{fig:4huevera}
\end{figure}

Finally, we remark that this construction also allows us to build Cantor sets of representants of heteroclinic connections within one fundamental domain.
To this end, we can replace the function $q$ with a nonnegative $C^\infty$ function such that there exists a Cantor set $C_1\subseteq K$ such that $q(\tau)=0$ for $\tau\in C_1$ and $q(\tau)>0$ for $\tau\not\in C_1$.
Such a function can be obtained as a series of bump functions taking positive values on each gap of the complement of $C_1$.
Indeed, for a Cantor set $C$  obtained by the procedure of successively excising subintervals stages, a function $q$ as before can be  defined as follows.
Write its complement as the disjoint union $K\setminus C=\bigcup_{n\geq 1}\bigcup_{j=1}^{2^{n-1}}I_{n,j}$, where $I_{n,j}=(a_{n,j},b_{n,j})$ is one of the intervals removed at stage $n$.
In the case of a middle-third Cantor set, the interval $I_{n,j}$ has length $3^{-n}$.
On each gap, define the bump function $\phi_{n,j}(x)=\exp\bigl(-1/((x-a_{n,j})(b_{n,j}-x))\bigr)$ for $x\in I_{n,j}$, and set $\phi_{n,j}(x)=0$ otherwise. Each $\phi_{n,j}$ is nonnegative and belongs to $C^\infty$; moreover, it is strictly positive on $I_{n,j}$ and flat at both endpoints, meaning that $\phi_{n,j}^{(k)}(a_{n,j})=\phi_{n,j}^{(k)}(b_{n,j})=0$ for every $k\geq 0$.

Define $q(x)=\sum_{n\geq 1}\sum_{j=1}^{2^{n-1}}\phi_{n,j}(x)$. Since the complementary intervals are pairwise disjoint, at most one summand is nonzero at any given point. Thus, $q(x)>0$ for $x\in K\setminus C$, whereas $q(x)=0$ for $x\in C$. Notice that these bumps are not normalized to have height one. Indeed, if $I=(a,b)$ is a gap of length $l=b-a$, then $\sup_{x\in I}\phi_I(x)=\exp(-4/l^2)$. Therefore, although the supports become narrower, their heights decrease faster than any power of $l$.

More generally, for every $k\geq 0$, there exist constants $A_k,c_k>0$ and an integer $N_k\geq 0$, independent of the gap $I$, such that $\|\phi_I^{(k)}\|_\infty\leq A_k\,l^{-N_k}\exp(-c_k/l^2)$. Since $l^{-N}\exp(-c/l^2)\to 0$ as $l\to 0$, for every $N\geq 0$ and $c>0$, the derivatives of every fixed order tend uniformly to zero on the gaps as their lengths tend to zero. Consequently, each derivative of $q$ extends continuously to $C$ by assigning it the value zero there. It follows that $q\in C^\infty([0,1])$, that $q^{-1}(\{0\})=C$, and, in fact, that $q$ is flat on $C$, i.e., $q^{(k)}(x)=0$ for every $x\in C$ and every $k\geq 0$.

Analogously, we can replace $\psi$ with a function on $\T^1$ such that there exists a Cantor set $C_2\subseteq\T^1$ such that $\psi(\varphi)=1$ whenever $\varphi\in C_2$ and $\psi(\varphi)>1$ for all $\varphi\not\in C_2$.

If middle-third Cantor sets $C_1$ and $C_2$ are used, the previous construction leads to a null Lebesgue
measure Cantor set of heteroclinic connections within the fundamental domain
$[\bar\tau,A(\bar\tau)]\times\T^1$, whereas if, for example, middle-quarter
Cantor sets are used, one obtains a positive Lebesgue measure Cantor set of such heteroclinic
connections.

\section{Conclusions}

This paper contributes to the discussion of different notions of phase-induced tipping introduced in the literature and of the relationships among them in the setting of asymptotically autonomous discrete dynamical systems.
The formulation of critical transitions in terms of local pullback attractors and repellers, as developed in the existing literature, provides a natural and well-established point of reference for the present discussion.
In the compactified formulation considered here, the past and future autonomous limit systems are represented by invariant hyperplanes, which are studied as normally hyperbolic attracting and repelling invariant manifolds.
The unstable fiber bundle of the past invariant hyperplane and the stable fiber bundle of the future invariant hyperplane provide additional geometric insight into the pullback objects involved in these transitions.
This viewpoint also allows analogous connections to be considered when the relevant object of the past system is not attracting or that of the future system is not repelling, while retaining the local pullback interpretation whenever its hypotheses apply.

From this perspective, phase-induced tipping is related to the existence, absence, and geometry of intersection between the corresponding stable and unstable fiber bundles.
Such intersections identify trajectories connecting prescribed objects of the past and future systems and therefore help determine which phases may lead to a given asymptotic outcome.
Several forms of reversibility facilitate the location of
some of these intersections and the characterization---or even prevention---of phase-induced tipping. 

The splitting of stable and unstable manifolds provides a mechanism through
which the phase may determine whether a connection exists and, consequently,
whether tipping occurs, as illustrated by the examples in
Section~\ref{sec:41numericalevidenceandresults}.
As discussed in Section~\ref{sec:42surgicalintersections}, 
the first and last tangencies between invariant manifolds determine the range of
accessible $\upalpha/\upomega$-limits sets in the past/future evolution of a given
initial state of the system.
 
The framework discussed through this paper also suggests some possible directions for further study.
Note that we have considered compactified asymptotically autonomous problems in
which the invariant hyperplanes representing the past and future limits are
normally hyperbolic. While this is natural if one can consider the appropriate functions leading
to time compactification, a natural extension would be to consider
compactifications for which one or both of these invariant hyperplanes are
parabolic (that is, with polynomial decay as considered
in~\cite{wieczorek2021}), requiring constructions adapted to the resulting
nonhyperbolic dynamics.
It would also be interesting to study more complicated past and future objects, including chaotic invariant sets, for which stable and unstable fiber bundles might still provide a useful way of describing transition channels and their dependence on the forcing phase.
These questions, together with applications to specific models are left for future work.

\appendix
\section{Proof of Theorem \ref{teor:2main_result}}
\label{appendix:proofoftheorem}
Since $F$ is triangular and a diffeomorphism of $[0,1]\times\mathbb R^d$,
the map $A:[0,1]\to[0,1]$ is a $C^{\ell+1}$ diffeomorphism and
$g_0:\mathbb R^d\to\mathbb R^d$ is a $C^{\ell+1}$ diffeomorphism.

By {\rm(H1)}, after decreasing $\tau_0>0$ if necessary, we may assume that
$A'(\tau)>1$ for every $\tau\in[0,\tau_0]$.
We first turn the bunching condition (H4) on $\mU$ into a uniform condition on
a neighbourhood.
Define
\[
B(\tau,x):=
\frac{D_xG\bigl(F^{-1}(\tau,x)\bigr)}
     {A'\bigl(A^{-1}(\tau)\bigr)}
\,.
\]
For $x=g_0(y)\in\mK$, one has $B(0,x)=Dg_0(y)/A'(0)$.
Let $C^\ell_d:=C^\ell(U_0,\R^d)$ be the space of functions from $U_0$ to $\R^d$ with continuous partial derivatives up to order $\ell$, and let us endow $C^0_d$ with the supremum norm $\|h\|_{C^0_d}:=\sup_{x\in U_0}\|h(x)\|$ corresponding to the chosen norm on $\R^d$.
Then,
{\rm(H4)}, compactness of $\mU$ and continuity imply that, after
decreasing $\tau_0$ if necessary, one has
\[
\|B\|_{C^0_d}
\max\Bigl\{
1,\,
\|DF^{-1}\|_{C^0_d}^{\,\ell}
\Bigr\}
<1\,.
\]
We shall use this estimate in the sequel.

Set
\[
C(\tau,x):=
\frac{D_\tau G\bigl(F^{-1}(\tau,x)\bigr)}
     {A'\bigl(A^{-1}(\tau)\bigr)}
\,.
\]
Since $F$ is $C^{\ell+1}$, the functions $B$ and $C$ are of class $C^\ell$
on $U_0$. Consider the affine graph transform
\[
(\mT u)(\tau,x):=C(\tau,x)+B(\tau,x)\,u\bigl(F^{-1}(\tau,x)\bigr)
\]
acting on $C^\ell_d$.
This is well defined because
$F^{-1}(U_0)\subseteq U_0$.

For $\ep\in(0,1]$, endow $C^\ell_d$ with the weighted norm 
\[
\|u\|_{\ell,\ep}:=
\sum_{j=0}^{\ell}\ep^j\|D^j u\|_{C^0_d}\,.
\]
We claim that, for $\ep>0$ sufficiently small, $\mT$ is a contraction
for this norm. Indeed, if 
$u,v\in C^\ell_d$ and
$w:=u-v$, then
\[
\mT u-\mT v=B\cdot(w\circ F^{-1})\,.
\]
Let $0\le j\le\ell$. By Leibniz' rule and Faà di Bruno's formula, the
principal 
contribution to $D^j(B\cdot(w\circ F^{-1}))$ is the term in which 
all $j$ derivatives fall on $w$ rather than $B$.
This term is bounded by
\[
\|B\|_{C^0_d}
\|DF^{-1}\|_{C^0_d}^j
\|D^j w\|_{C^0_d}\,.
\]
Multiplying the principal contribution of order $j$ by $\ep^j$, and then summing over $j=0,\dots,\ell$, the sum of all principal contributions is bounded by
\[
\|B\|_{C^0_d}
\max\{1,\|DF^{-1}\|_{C^0_d}^\ell\}
\|w\|_{\ell,\ep}\,.
\]
All remaining terms contain either a derivative of $B$ or a derivative of
$w$ of order at most $j-1$. Therefore, after multiplying by $\ep^j$, each of
these lower-order terms gains an additional factor $\ep$. Consequently there
exists a polynomial $P_\ell$, depending only on $\ell$ and $d$, such that
\[
\|\mT u-\mT v\|_{\ell,\ep}
\le
\left(
\theta_0+
\ep\,P_\ell\bigl(\|B\|_{ C^\ell_d},\|F^{-1}\|_{C^\ell_d}\bigr)
\right)
\|u-v\|_{\ell,\ep}\,,
\]
where
$\theta_0:=
\|B\|_{C^0_d}
\max\{1,\|DF^{-1}\|_{C^0_d}^\ell\}<1$.

Choosing $\ep>0$ sufficiently small, $\mT$ is a contraction. By
Banach's fixed point theorem, there exists a unique $u_*\in C^\ell_d$ 
such that $\mT u_*=u_*$.

For every $(\tau,x)\in F^{-1}(U_0)$, the fixed point equation gives
\[
u_*\bigl(F(\tau,x)\bigr)
=
\frac{D_\tau G(\tau,x)+D_xG(\tau,x)\,u_*(\tau,x)}
     {A'(\tau)}
\,.
\]
Define $X(\tau,x):=(1,u_*(\tau,x))$.
Then, the previous identity is equivalent to
\[
DF(\tau,x)X(\tau,x)
=
A'(\tau)X\bigl(F(\tau,x)\bigr)
\]
for every $(\tau,x)\in F^{-1}(U_0)$. Thus the line field
$E^u(\tau,x):=\mathrm{span}(X(\tau,x))$ is $C^\ell$ and $DF$-invariant on
$F^{-1}(U_0)$. Since the first component of $X$ is identically equal to
$1$, this line field is transverse to the sections $\{\tau=\mathrm{const}\}$.

By the $C^\ell$ dependence theorem for ODEs, there exist $\delta\in(0,\tau_0]$ and a neighbourhood
$\mV_0\subseteq\mU$ with $\mV_0\subseteq\mV$ of $\mK$ such that, for every $x_0\in\mV_0$, the
initial value problem
\[
x'(\tau)=u_*(\tau,x(\tau)),\qquad x(0)=x_0\,,
\]
has a unique solution on $[0,\delta)$ not leaving $\mU$.
Denote this solution by
$v_0(\tau,x_0)$.
The map $v_0:[0,\delta)\times\mV_0\to\mathbb R^d$
is of class $C^\ell$ and satisfies $v_0(0,x_0)=x_0$.
We define
$W^u:=v_0|_{[0,\delta)\times\mK}$ and
$\Psi(\tau,x):=(\tau,W^u(\tau,x))$.
Then, {\rm(i)} follows immediately.

\medskip

We now prove the invariance identity. Fix $x\in\mK$ and $\tau\in(0,\delta)$, and note that $0<A^{-1}(\tau)<\tau$.
Define $\gamma_x(t):=\Psi(t,x)$ for $t\in[0,\delta)$ and note that the curve $\gamma_x$ is an integral curve
of $X$.
For $s\in[0,A^{-1}(\tau)]$,
consider
$\widehat\gamma_x(s):=
F^{-1}\bigl(\gamma_x(A(s))\bigr)$.
Using the invariance of $X$, one obtains
\[
\widehat\gamma_x'(s)=DF^{-1}\big(F(\widehat\gamma_x(s))\big)\,X\big(F(\widehat\gamma_x(s))\big)\,A'(s)=X(\widehat\gamma_x(s))\,.
\]
Moreover,
$\widehat\gamma_x(0)=F^{-1}(0,x)=(0,g_0^{-1}(x))$. By uniqueness of
integral curves of $X$ it follows that
\[
F^{-1}\bigl(\Psi(\tau,x)\bigr)
=
\Psi\bigl(A^{-1}(\tau),g_0^{-1}(x)\bigr)\,.
\]
Since $g_0(\mK)=\mK$, this gives
{\rm(ii)} for every $x\in\mK$.

\medskip

We next show that $\Psi:[0,\delta)\times\mK\to[0,\delta)\times\mathbb R^d$ is a $C^\ell$ embedding.
Since its first component is $\tau$, 
points with  different  $\tau$ have different images. For a fixed
$\tau$, the map $x\mapsto v_0(\tau,x)$ is the time-$\tau$ map of the
nonautonomous ODE $x'=u_*(s,x)$, and is therefore a $C^\ell$
diffeomorphism onto its image. 
In particular, $\Psi$ is an injective immersion and defines a  $C^\ell$ embedding.
This proves {\rm(iv)}.

\medskip

Let
$
\mW^u_{\mathrm{loc}}(\mK_-):=
\bigl\{
(\tau,W^u(\tau,x))\,\big|\,\tau\in(0,\delta),\ x\in\mK
\bigr\}\,.
$
%\]
By the invariance identity~{\rm(ii)}, if
$z=\Psi(\tau,x)\in\mW^u_{\mathrm{loc}}(\mK_-)$, then
$F^{-n}(z)=\Psi\bigl(A^{-n}(\tau),g_0^{-n}(x)\bigr)$
for all $n\ge0$.
Since $A^{-n}(\tau)\to0$ and $g_0^{-n}(x)\in\mK$, we get
$\dist\bigl(F^{-n}(z),\{0\}\times\mK\bigr)\to0$.
This proves the inclusion ``$\subseteq$'' in {\rm(iii)}.

Conversely, let $z\in(0,\delta)\times\mathbb R^d$ satisfy
$\dist\bigl(F^{-n}(z),\{0\}\times\mK\bigr)\to0$.
Write $z_n:=F^{-n}(z)$. Recall that the map
\[
\Phi:[0,\delta)\times\mV_0\to[0,\delta)\times\mathbb R^d,
\qquad
\Phi(\tau,y):=(\tau,v_0(\tau,y))\,,
\]
is also a $C^\ell$ diffeomorphism onto its image. Moreover, $\Phi(0,y)=(0,y)$.
Thus $\Phi([0,\delta)\times\mV_0)$ is a neighbourhood of
$\{0\}\times\mK$ in $[0,\delta)\times\mathbb R^d$. Hence, there exists $N\in\N$ such that $z_n\in\Phi([0,\delta)\times\mV_0)$ for all $n\geq N$.
Therefore, for every $n\ge N$,
there exist $\tau_n\in[0,\delta)$ and $y_n\in\mV_0$ such that
$z_n=\Phi(\tau_n,y_n)$.
The invariance identity for the extended parametrization $\Phi$ gives
$F^{-1}\bigl(\Phi(\tau,y)\bigr)
=
\Phi\bigl(A^{-1}(\tau),g_0^{-1}(y)\bigr)
$.
Since the first component of $F$ is $A$, if $\tau_z$ denotes the first
component of $z$, then necessarily $\tau_n=A^{-n}(\tau_z)$.
Hence, for every $n\ge N$,
$y_{n+1}=g_0^{-1}(y_n)$.
Consequently, $y_{N+j}=g_0^{-j}(y_N)$ for every $j\ge0$.

Since $\tau_n\to0$, $\Phi(0,y)=(0,y)$, and
$\dist(z_n,\{0\}\times\mK)\to0$, we get
$\dist(y_n,\mK)\to0$.
Therefore, the $\upalpha$-limit set of $y_N$ is contained in $\mK$.
By {\rm(H3)},
this implies $y_N\in\mK$.
Consequently,
$z_N=\Phi(\tau_N,y_N)=\Psi(\tau_N,y_N)\in\mW^u_{\mathrm{loc}}(\mK_-)$.
Using the forward invariance identity finitely many times, we conclude that
$z=F^N(z_N)\in\mW^u_{\mathrm{loc}}(\mK_-)$.
This proves the reverse
inclusion in {\rm(iii)}.

Finally, uniqueness follows from the dynamical characterization {\rm(iii)}.
Indeed, any other $C^\ell$ map with the same characterization has the same
image $\mW^u_{\mathrm{loc}}(\mK_-)$. Since every point of this image has a
unique first coordinate $\tau$ and the sections are graphs over $\mK$,
the corresponding fiber map must coincide with $W^u$.

\medskip

It remains to prove the last assertion under the additional assumption that
$(\mK,g_0)$ is conjugate to a rigid rotation.
Let
$h:\mK\to\mathbb T^m$ be a homeomorphism such that
$h\circ g_0|_{\mK}=R_\omega\circ h$, where $R_\omega$ is an irrational rotation of
$\mathbb T^m$.

First, note that $W^u(\tau,x)$ satisfies the required asymptotic property.
Indeed, by the invariance identity in {\rm(ii)}, for every
$\tau\in(0,\delta)$ and every $x\in\mK$ we have
\[
F^{-n}\bigl(\tau,W^u(\tau,x)\bigr)
=
\bigl(A^{-n}(\tau),W^u(A^{-n}(\tau),g_0^{-n}(x))\bigr)\,.
\]
Since $A^{-n}(\tau)\to0$ and $W^u(0,z)=z$ for every $z\in\mK$, it follows
that
\[
\dist\Bigl(
F^{-n}\bigl(\tau,W^u(\tau,x)\bigr),
\bigl(0,g_0^{-n}(x)\bigr)
\Bigr)\to0\,.
\]

We now prove uniqueness. Let $y\in\mathbb R^d$ satisfy the property.
Then, in particular,
\[
\dist\bigl(F^{-n}(\tau,y),\{0\}\times\mK\bigr)\to0\,,
\]
and, by {\rm(iii)}, one has $(\tau,y)\in\mW^u_{\mathrm{loc}}(\mK_-)$. Hence there exists
$z\in\mK$ such that $y=W^u(\tau,z)$. Using again the invariance identity, we get
\[
F^{-n}(\tau,y)
=
F^{-n}\bigl(\tau,W^u(\tau,z)\bigr)
=
\bigl(A^{-n}(\tau),W^u(A^{-n}(\tau),g_0^{-n}(z))\bigr)\,.
\]
Since $A^{-n}(\tau)\to0$, the assumed convergence to
$(0,g_0^{-n}(x))$ ensures that
\[
\dist\bigl(W^u(A^{-n}(\tau),g_0^{-n}(z)),g_0^{-n}(x)\bigr)\to0\,.
\]
Since the sequence $(g_0^{-n}(z))_n$ is bounded in $\mK$,
$\dist\bigl(W^u(A^{-n}(\tau),g_0^{-n}(z)),g_0^{-n}(z)\bigr)\to0$ and therefore
$\dist\bigl(g_0^{-n}(z),g_0^{-n}(x)\bigr)\to0$.
Applying the conjugacy $h$, we obtain
$\dist_{\mathbb T^m}\bigl(
R_\omega^{-n}h(z),R_\omega^{-n}h(x)
\bigr)\to0$.
But $R_\omega$ is an isometry of $\mathbb T^m$, and therefore
\[
\dist_{\mathbb T^m}\bigl(
R_\omega^{-n}h(z),R_\omega^{-n}h(x)
\bigr)
=
\dist_{\mathbb T^m}\bigl(h(z),h(x)\bigr)
\]
for every $n\ge0$. Hence $\dist_{\mathbb T^m}(h(z),h(x))=0$, so
$h(z)=h(x)$. Since $h$ is injective, $z=x$. Consequently
$y=W^u(\tau,x)$.
This proves the uniqueness claimed in the last assertion.

\begin{thebibliography}{99}
\bibitem{AlexanderGardnerJones1990} J.~Alexander, R.A.~Gardner, C.K.R.T.~Jones. A topological invariant arising in the stability analysis of travelling waves, \emph{J. Reine Angew. Math.} \textbf{410}, 167--212 (1990).
\bibitem{alk2018} H.~Alkhayuon, P.~Ashwin. Rate-induced tipping from periodic attractors: Partial tipping and connecting orbits. \textit{Chaos} \textbf{28}, 033608 (2018).
\bibitem{alk2023} H.~Alkhayuon, J.~Marley, S.~Wieczorek, R.C.~Tyson. Stochastic resonance in climate reddening increases the risk of cyclic ecosystem extinction via phase-tipping. \emph{Global Change Biology} \textbf{29}, 3347–3363 (2023).
\bibitem{alk2021} H.~Alkhayuon, R.C.~Tyson, S.~Wieczorek. Phase tipping: how cyclic ecosystems respond to contemporary climate. \emph{Proc. Roy. Soc. A} \textbf{477} (2254), 20210059 (2021).
\bibitem{AnagnostopoulouPotzscheRasmussen2023} V.~Anagnostopoulou, C.~P{\"o}tzsche, M.~Rasmussen. \emph{Nonautonomous Bifurcation Theory: Concepts and Tools}. Springer, 2023.
%\bibitem{ashwin1994}
%P.~Ashwin, J.~Buescu, I.~Stewart. Bubbling of attractors and synchronisation of chaotic oscillators. \emph{Phys. Lett. A} \textbf{193}, 126--139 (1994).
\bibitem{apw2017} P.~Ashwin, C.~Perryman, S.~Wieczorek. Parameter shifts for nonautonomous systems in low dimension: Bifurcation- and Rate-induced tipping. \textit{Nonlinearity} \textbf{30} (6), 2185 (2017).
\bibitem{AshwinWieczorekVitoloCox2012} P.~Ashwin, S.~Wieczorek, R.~Vitolo, P.~Cox. Tipping points in open systems: Bifurcation, noise-induced and rate-dependent examples in the climate system. \emph{Philos. Trans. R. Soc. A.} \textbf{370} (1962), 1166--1184 (2012).
\bibitem{canadell2015} M.~Canadell, R.~de la Llave. KAM tori and whiskered invariant tori for non-autonomous systems. \textit{Physica D} \textbf{310}, 104–113 (2015).
%\bibitem{cartwright2002}
%J.H.E.~Cartwright, M.O.~Magnasco, O.~Piro. Bailout embeddings, targeting of invariant tori, and the control of Hamiltonian chaos. \emph{Phys. Rev. E} \textbf{65}, 045203(R) (2002).
\bibitem{cima2015} A.~Cima, A.~Gasull, F.~Mañosas, R.~Ortega.
Linearization of planar involutions in $C^1$.
\textit{Annali di Matematica} \textbf{194}, 1349–1357 (2015). 
\bibitem{lno1} J.~Due\~{n}as, I.P.~Longo, R.~Obaya. Rate-induced tracking for concave or d-concave transitions in a time-dependent environment with application in ecology. \textit{Chaos} \textbf{33} (12), 123113 (2023).
\bibitem{dno3} J.~Due\~{n}as, C.~N\'{u}\~{n}ez, R.~Obaya. Critical transitions in d-concave nonautonomous scalar ordinary differential equations appearing in population dynamics. \textit{SIAM J. Appl. Dyn. Syst.} \textbf{22} (4), 2649-2692 (2023).
\bibitem{dno4} J.~Due\~{n}as, C.~N\'{u}\~{n}ez, R.~Obaya. Critical transitions for asymptotically concave or d-concave nonautonomous differential equations with applications in ecology. \textit{J. Nonlinear Sci.} \textbf{34}, 105 (2024).
\bibitem{eldering2013}
J.~Eldering.
\textit{Normally Hyperbolic Invariant Manifolds: The Noncompact Case.} Atlantis Series in Dynamical Systems \textbf{2}.
Springer, 2013.
\bibitem{fenichel1974} N.~Fenichel.
Asymptotic stability with rate conditions for dynamical systems.
\textit{Bull. Amer. Math. Soc.} \textbf{80} (2), 346-349 (1974).
\bibitem{fonsimvie19} E.~Fontich, C.~Sim{\'{o}}, A.~Vieiro, Splitting of the separatrices after a Hamiltonian{\textendash}Hopf bifurcation under periodic forcing. \textit{Nonlinearity} \textbf{4}, 1440--1493 (2019).
\bibitem{mamotreto} \`{A}.~Haro, M.~Canadell, J.L.~Figueras, A.~Luque, J.M.~Mondelo. \textit{The Parameterization Method for Invariant Manifolds.} Applied Mathematical Sciences \textbf{195}, Springer-Verlag, Switzerland, 2016.
\bibitem{harodelallave2006}
\`{A}.~Haro, R.~de la Llave.
A parameterization method for the computation
of invariant tori and their whiskers
in quasi-periodic maps: Rigorous results.
\textit{J. Differential Equations} \textbf{228}, 530–579 (2006).
\bibitem{hirsch1977}
M.W.~Hirsch, C.C.~Pugh, M.~Shub.
\textit{Invariant Manifolds.}
Springer, 1977.
\bibitem{hoyerleitzel2021}
A.~Hoyer-Leitzel, A.N.~Nadeau.
Rethinking the definition of rate-induced tipping.
\emph{Chaos} \textbf{31}, 053133 (2021).
\bibitem{jorba2001} \`{A}.~Jorba. Numerical computation of the normal behaviour of
invariant curves of n-dimensional maps. \textit{Nonlinearity} \textbf{14}, 943 (2001).
\bibitem{kiers2020} C.~Kiers.
Rate-induced tipping in discrete-time dynamical systems.
\emph{SIAM J. Appl. Dyn. Syst.} \textbf{19} (2), 1200-1224 (2020).
\bibitem{KloedenPotzsche2013}
P.E.~Kloeden, C.~Pötzsche (eds). \emph{Nonautonomous Dynamical Systems in the Life Sciences}. Lecture Notes in Mathematics \textbf{2102}. Springer, Cham, 2013.
\bibitem{lamb1998}
J.S.W.~Lamb, J.A.G.~Roberts.
Time-reversal symmetry in dynamical systems: A survey.
\emph{Phys. D} \textbf{112}, 1-39 (1998).
\bibitem{lamb2026}
J.S.W.~Lamb, M.~Rasmussen, W.H.~Tey. Bifurcations of the Hénon Map with additive bounded noise. \textit{SIAM J. Appl. Dyn. Syst.} \textbf{25} (1), 351-374 (2026).
\bibitem{LomeliRamirez2008}
H.E.~Lomel\'i, R.~Ram\'irez-Ros. Separatrix splitting in 3D volume-preserving maps.  \textit{SIAM J. Appl. Dyn. Syst.} \textbf{7} (4), 1527–1557 (2008).
\bibitem{LongoNunezObaya2021}
I.P.~Longo, C.~Núñez, R.~Obaya. Critical transitions in piecewise uniformly continuous concave quadratic ordinary differential equations. \emph{J. Dynam. Differential Equations} \textbf{36}, 2153--2192 (2024).
\bibitem{longonunezobaya}
I.P.~Longo, C.~Núñez, R.~Obaya.
Critical transitions for scalar nonautonomous systems with concave nonlinearities: some rigorous estimates.
\textit{Nonlinearity} \textbf{37} (4), 045017 (2024).
\bibitem{lnor}
I.P.~Longo, C.~Núñez, R.~Obaya, M.~Rasmussen.
Rate-induced tipping and saddle-node bifurcation for quadratic differential equations with nonautonomous asymptotic dynamics.
\textit{SIAM J. Appl. Dyn. Syst.} \textbf{20} (1), 500--540 (2021).
\bibitem{Markus1956}
L.~Markus. Asymptotically autonomous differential systems, in S.~Lefschetz (ed.). \emph{Contributions to the Theory of Nonlinear Oscillations, Vol.~III}, Annals of Mathematics Studies \textbf{36}, Princeton University Press, 1956.
\bibitem{murillo} A.~Murillo, A.~Vieiro. Periodic perturbation of a 3D conservative flow with a heteroclinic connection to saddle-foci. \textit{Commun. Nonlinear Sci. Numer. Simul.} \textbf{143}, 108602 (2025).
%\bibitem{neishtadt1987}
%A.I.~Neishtadt. On the change in the adiabatic invariant on crossing a separatrix in systems with two degrees of freedom. \emph{J. Appl. Math. Mech.} \textbf{51}, 586--592 (1987).
%\bibitem{neishtadttreschev2021}
%A.I. Neishtadt, D.V. Treschev. Dynamical phenomena connected with stability loss of equilibria and periodic trajectories. \emph{Russian Math. Surveys} \textbf{76} (5), 883--926 (2021).
%\bibitem{neishtadtvasiliev2005} A.I.~Neishtadt and A.A.~Vasiliev. Phase change between separatrix crossings in slow-fast Hamiltonian systems. \emph{Nonlinearity} \textbf{18}, 1393--1406 (2005).
\bibitem{nippstoffer} K.~Nipp and D.~Stoffer,
\emph{Invariant Manifolds in Discrete and Continuous Dynamical Systems}.
EMS Tracts in Mathematics \textbf{21}, European Mathematical Society, Z\"urich, 2013.
\bibitem{pesin} Y.B.~Pesin. \textit{Lectures on partial hyperbolicity and stable ergodicity}. Zurich Lectures in Advanced Mathematics, European Mathematical Society, Zürich, 2004.
\bibitem{rasmussen2007} M.~Rasmussen. \textit{Attractivity and Bifurcation for Nonautonomous Dynamical Systems}. Lecture Notes in Mathematics \textbf{1907}, Springer-Verlag, Berlin, 2007.
\bibitem{rasmussen2008} M.~Rasmussen. Bifurcations of Asymptotically Autonomous
Differential Equations. \textit{Set-Valued Anal.} \textbf{16}, 821–849 (2008).
\bibitem{scarcella2024} D.~Scarcella. Asymptotically quasiperiodic solutions
for time-dependent Hamiltonians. \textit{Nonlinearity} \textbf{37}, 065005 (2024).
\bibitem{scarcella2024-2} D.~Scarcella. Biasymptotically quasi-periodic solutions for time-dependent Hamiltonians. \textit{Regul. Chaotic Dyn.} \textbf{29}, 620-653 (2024).
%\bibitem{scarcella2025} D.~Scarcella. Weakly asymptotically quasiperiodic solutions for time-dependent Hamiltonians with a view to celestial mechanics. \textit{J. Differential Equations} \textbf{431}, 113192 (2025).
\bibitem{vieirophd} A.~Vieiro.
Study of the effect of conservative and weakly dissipative perturbations on symplectic maps and Hamiltonian systems.
\emph{PhD thesis}, Universitat de Barcelona, 2009.
\bibitem{wieczorek2023}
S.~Wieczorek, C.~Xie, P.~Ashwin.
Rate-induced tipping: thresholds, edge states and connecting orbits.
\emph{Nonlinearity} \textbf{36}, 3238--3293 (2023).
\bibitem{wieczorek2021} S.~Wieczorek, C.~Xie, C.K.R.T.~Jones. Compactification for asymptotically autonomous dynamical systems: theory, applications and invariant manifolds. \textit{Nonlinearity} \textbf{34}, 2970 (2021).
\bibitem{wiggins} S.~Wiggins.
\textit{Normally Hyperbolic Invariant Manifolds in Dynamical System.}
Applied Mathematical Sciences \textbf{105}, Springer-Verlag, New York, 1994.
\end{thebibliography}
\end{document}